\theoremstyle{plain}
\newtheorem{theorem*}{Theorem}
\newtheorem{theorem}{Theorem}
\newtheorem{proposition}[theorem]{Proposition}
\newtheorem{lemma}[theorem]{Lemma}
\newtheorem{corollary}[theorem]{Corollary}
\numberwithin{equation}{section}
\theoremstyle{definition}
\newtheorem{definition}[theorem]{Definition}
\newtheorem{notation}[theorem]{Notation}
\theoremstyle{remark}
\DeclareMathOperator{\trace}{tr}
\DeclareMathOperator{\dom}{dom}
\newcommand{\pow}{\mathscr{P}}
\DeclareMathOperator{\cof}{cf}
\numberwithin{theorem}{section}
\newcommand{\cK}{{\mathcal K}}
\newcommand{\cR}{{\mathcal R}}
\newcommand{\cX}{{\mathcal X}}
\newcommand{\bbP}{\mathbb P}
\DeclareMathOperator{\ZFC}{{\mathsf{ZFC}}}
\newcommand{\lflt}{\ell}
\newcommand{\elone}{\sigma}
\newcommand{\eltwo}{\tau}
\newcommand{\tree}{\mathbf{T}}
\newcommand{\run}{\mathtt{r}}
\newcommand{\prFQ}{F}
\newcommand{\fullabtree}{\widetilde{\mathbf{T}}}
\DeclareMathOperator{\hull}{{\rm{Hull}}}
\DeclareMathOperator{\Hull}{{\rm{Hull}}}
\newcommand{\FMGame}{\mathbf{G}^{\mathbf{FM}}}
\newcommand{\Nambagamezero}[2]{\mathbf{G}_0^{\mathbf{Nm}}(#1,#2)}
\newcommand{\Nambagameone}[2]{\mathbf{G}_1^{\mathbf{Nm}}(#1,#2)}
\DeclareMathOperator{\game}{\mathbf{G}}
\DeclareMathOperator{\ran}{ran}
\newcommand{\PI}{{\mathrm{PI}}}
\newcommand{\player}{{\mathrm{PII}}}
\newcommand{\strNamforczero}[1]{\mathbb{P}_0^{\mathbf{Nm}}(#1)}
\newcommand{\strNamforcone}[1]{\mathbb{P}_1^{\mathbf{Nm}}(#1)}
\title[Cofinal countable sequences through multiple regular cardinals]{Adding cofinal countable sequences through multiple regular cardinals by ssp forcing}
\author[BDB]{Ben De Bondt}
\address[BDB]{Université Paris Cité, Sorbonne Université, CNRS, IMJ-PRG, F-75013 Paris, France}
\curraddr{Universität Münster,
	Institut für Mathematische Logik und Grundlagenforschung, 
	Einsteinstr.\ 62,
	48149 Münster,
	Germany}
\email{bdebondt@uni-muenster.de}
\author[BV]{Boban Veli\v{c}kovi\'c}
\address[BV]
{Institut de Math\'ematiques de Jussieu (IMJ-PRG)\\
	Universit\'e Paris Cit\'e\\
	B\^atiment Sophie Germain\\
	8 Place Aur\'elie Nemours \\ 75013 Paris, France}
\email{boban.velickovic@imj-prg.fr}
\urladdr{https://webusers.imj-prg.fr/~boban.velickovic/}
\begin{document}
 \maketitle
 
 \begin{abstract}
We present a direct construction of stationary set preserving forcings that 
make $\omega$-cofinal all the members of some arbitrary set $\cK$ of regular cardinals $\kappa > \omega_1$. In addition, it is made possible to ensure that no other uncountable regular cardinals from the ground model acquire countable cofinality in the forcing extension. 
Our method is elementary, being based on a combinatorial argument %by Foreman and Magidor
from \cite{foreman-magidor} 
together with generalizations of typical side-condition arguments and needs no assumptions beyond $\ZFC$. 
 \end{abstract}

\subsection*{Acknowledgements}
The first 
author would like to thank Ur Ya'ar for useful discussions concerning the countable-cofinality-constructible model $C^*$.
The material in this note (with exception of Section~\ref{s5}) appeared in the PhD thesis \cite{debondtthesis}
of BDB, which was 
supported by a Cofund MathInParis PhD fellowship. This project has received funding from the European Union’s Horizon 2020 research and innovation programme under the Marie Sk\l{}odowska-Curie grant agreement No.~754362.~\includegraphics[width=0.55cm]{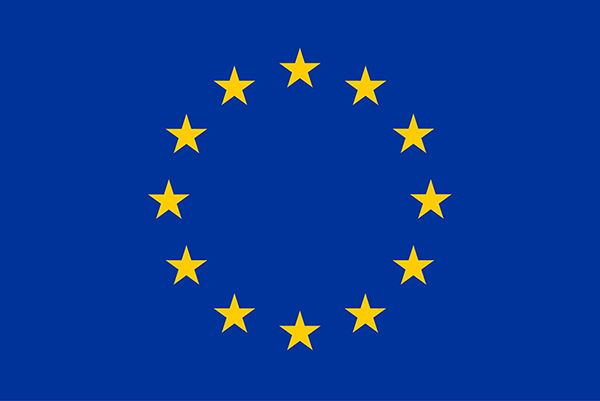}\\
BDB is now funded by the Deutsche Forschungsgemeinschaft (DFG, German Research Foundation) under Germany's Excellence Strategy EXC 2044\,-390685587, Mathematics Münster: Dynamics--Geometry--Structure.\\
BV was supported by an Emergence en recherche grant from the Universit\'e Paris Cit\'e (IdeX).

 \section{Preliminaries}
 
 We make use of the following notation.
 
 \begin{notation}

Let $\mathscr{H}$ be a transitive model of $\mathsf{ZFC}^-$ and $M \prec \mathscr{H}$. If $x \in \bigcup M,$ we define
 \[\Hull(M,x) = \{ f(x) : f \in M, x\in \dom(f) \}\]
 and for $x \subseteq \bigcup M,$ we define
  \[\Hull^\ast(M,x) = \{ f(y) : f \in M, y \in x \cap \dom(f) \}.\]
 We define the expression \[ x \parallel_{\omega_1} M, \] to mean that 
  \[\Hull(M,x) \cap \omega_1^\mathscr{H} = M \cap \omega_1^\mathscr{H}.\]  
  For a regular cardinal $\theta$ and a set $x,$ we write $x \ll \theta$ to indicate that $\pow(\pow(x)) \in H_\theta.$
  \end{notation}

The concept of strongly ssp forcing was defined in \cite{debondt2024increasing}.

\begin{definition} Let $\bbP$ be a forcing notion and $\theta \gg \bbP$ a regular cardinal. Let $\bbP \in M \prec H_\theta$. 
The $\bbP$-condition $p^*$ is called $(M, \mathbb{P})$-\emph{semigeneric} if for every $q\leq p^*$ and every predense set $D \subseteq \bbP$ with $D \in M,$ there exists some $r \in D$ such that
	\[ r \parallel q \text{ and } r \parallel_{\omega_1} M.\]
	The condition $p^*$ is called \emph{strongly $(M, \mathbb{P})$-semigeneric} if for every $q\leq p^*$ there exists $\trace(q | M) \in \bbP$ such that $\trace(q | M) \parallel_{\omega_1} M $ and for every $r \in \Hull(M,\trace(q | M)) \cap \bbP,$ 
	\[r \leq \trace (q | M)\Rightarrow r \parallel q. \]
	$\mathbb{P}$ is said to be \emph{(strongly) semiproper} with respect to $M\prec H_\theta$ if for every $p\in \mathbb{P}\cap M,$ there exists $p^* \leq p$ such that $p^*$ is (strongly) $(M, \mathbb{P})$-semigeneric.
\end{definition}

A set $\cX \subseteq [H_\theta]^\omega$ is \emph{projective stationary} if for every stationary subset $S \subseteq \omega_1$ the set $\{M \in \cX : M \cap \omega_1 \in S\}$ is stationary in $[H_\theta]^\omega.$

\begin{definition}
	A forcing $\mathbb{P}$ is \emph{strongly ssp} if for every regular cardinal $\theta$ which is sufficiently large ($\theta \gg \bbP$), there exist projective stationary many countable $M \prec H_\theta$ such that $\mathbb{P}$ is strongly semiproper with respect to $M.$
\end{definition}

If $\mathbb{P}$ is strongly semiproper with respect to $M\prec H_\theta,$ then $\mathbb{P}$ is semiproper with respect to $M$ (see \cite[Lemma 3.3]{debondt2024increasing}). Furthermore, the forcing $\bbP$ is ssp if and only if for every regular cardinal $\theta$ which is sufficiently large ($\theta \gg \bbP$), there exist projective stationary many countable $M \prec H_\theta$ such that $\mathbb{P}$ is semiproper with respect to $M$ (\cite{fengjech}). It follows that, as expected, strongly ssp forcings are ssp.\\
\\
Throughout the construction of our forcings, we will keep the following notation fixed.
 
\begin{notation}$\,$
	\begin{enumerate}
		\item Let's fix a set $\cK$ of uncountable regular cardinals with $\omega_1 \notin \cK$.  
	\item Define the cardinal \[\lambda = \sup(\cK).\]
\item Let $\mathcal{R}_\lambda$ be the set of all uncountable regular cardinals $\kappa < \lambda$.
		\end{enumerate}
\end{notation}

\medskip

 \section{Games}
\label{S.Games} 

The forcings $\strNamforczero{\cK}$, $\strNamforcone{\cK}$ (to be defined below) have as side conditions certain countable models $M \prec H_\mu$ (for $\mu$ regular and sufficiently large), which are characterized by means of two games $\Nambagamezero{M}{\mathcal{K}}$ and $\Nambagameone{M}{\mathcal{K}}$. In order to prove that there are projective stationary many such models, we make use of a combinatorial argument from \cite{foreman-magidor}, where closely related games have been analyzed. We recall the relevant notions and results from \cite{foreman-magidor}.

\begin{notation}
	If $\tree$ is a subtree of $\lambda^{<\omega}$ and $\elone \in \tree,$ we write $\tree\cap\elone{\uparrow}$ for the set
\[\{\eltwo \in \tree: \elone \subseteq \eltwo\}.\]
\end{notation}

\subsection{$\prFQ$-fully labelled trees}

\,\\
 We will make use of certain trees that were considered in Section 3 of \cite{foreman-magidor} and first introduce terminology related to those trees.
\begin{definition}
	\label{def:fully}
	Let $\prFQ: \lambda^{<\omega} \to [\mathcal{R}_\lambda]^{\omega}.$
	An {\it $\prFQ$-fully labelled tree}
	\index{F-fulle labelled tree@$\prFQ$-fully labelled tree}
	is a pair $(\fullabtree,\lflt)$ with $\fullabtree$ a subtree of $(\lambda^{<\omega},\subseteq)$ and $\lflt$ a function from $\fullabtree$ to $\mathcal{R}_\lambda$ such that the following two conditions are satisfied:
	
	\begin{enumerate}[label=(\alph*)]
		\item for all $\elone\in\fullabtree$, the set $\{ \eta\in\lambda : \sigma\smallfrown\eta\in\fullabtree   \}$ is a cofinal subset of $ \lflt(\elone),$
		\item for all $\elone\in\fullabtree$ and all $\kappa\in\prFQ(\sigma),$ there exist arbitrarily large $n< \omega$ such that $(\forall \eltwo\in \fullabtree\cap\elone{\uparrow}\, \cap \,  \lambda^n) \  \lflt(\eltwo)=\kappa.$
	\end{enumerate}
\end{definition}

Note that for every function $\prFQ: \lambda^{<\omega} \to [\mathcal{R}_\lambda]^{\omega}$  there exist $\prFQ$-fully labelled trees (this is simply a matter of bookkeeping). %This is simply a matter of bookkeeping: one constructs by recursion on $n$ the growing finite height trees $\fullabtree\cap\lambda^n$ together with the restriction of $\ell$ to $\fullabtree\cap\lambda^n$. To make the resulting tree $\fullabtree$ $\prFQ$-fully labelled, one has for every $n$ and for every $\sigma \in \fullabtree\cap\lambda^n$ only countably many requirements that have to be satisfied above $\sigma$ and $\omega$-many remaining stages of $n$ to complete these countably many tasks, so the required construction can be performed.

\begin{definition}
	Let $(\fullabtree,\ell)$ be an $\prFQ$-fully labelled tree and let $\tree$ be a subtree of $\fullabtree$. For $\kappa\in \mathcal{R}_\lambda$, we say that
	\begin{itemize}
		\item $\kappa$ is a {\it branching cardinality}\index{branching cardinality} for $\tree$ if $(\forall \elone\in \tree)$
		\[ \lflt(\elone)=\kappa\Rightarrow |\{ \eta\in\kappa: \elone\smallfrown\eta\in \tree  \}|=\lflt(\sigma),\]
		\item $\kappa$ is a {\it fixing cardinality}\index{fixing cardinality} for $\tree$ if $(\forall\elone\in \tree)$
		\[\lflt(\elone)=\kappa\Rightarrow |\{ \eta\in\kappa : \elone\smallfrown\eta\in \tree  \}|= 1.\]
	\end{itemize}
	$\tree\subseteq\fullabtree$ is called an {\it acceptable subtree} of $\fullabtree$ if for every node $\elone \in \tree,$ the cardinal $\lflt(\elone)$ is either a branching cardinality for $\tree$ or a fixing cardinality for $\tree$.
\end{definition}

\begin{definition}
	If $\theta \geq \lambda$ is regular and $M \prec H_\theta$ is a countable elementary submodel of $H_\theta,$ then
	define $\prFQ_M: \lambda^{<\omega} \to [\mathcal{R}_\lambda]^{\omega}$ by \[\prFQ_M(\elone) =\mathcal{R}_\lambda\cap \hull(M,\elone).\] We shall abbreviate $\prFQ_M$-fully labelled tree to {\it $M$-fully labelled tree.}
\end{definition}
\medskip

\subsection{The Foreman-Magidor game}

\,\\
We now recall the definition of a game defined in \cite{foreman-magidor}.

\begin{definition}[{\cite[Proof of Lemma 9]{foreman-magidor}}]\label{def: FMgame}
	Let $\tree$ be an acceptable subtree of an $\prFQ$-fully labelled tree $(\fullabtree,\lflt)$, let $\kappa \in \mathcal{R}_\lambda$ be a branching cardinality for $\tree,$ let $\delta < \kappa$ and let $B \subseteq [\tree]$ be a set of branches of~$\tree.$ The corresponding {\it Foreman-Magidor Game 
		$\FMGame(\tree,\kappa,\delta,B)$} is the following game played by players $\PI$ and $\player$.

	Both $\PI$ and $\player$ play ordinals in $\lambda.$ After round $n$ has been  concluded, a node $\sigma_{n+1} \in \tree$ will be defined and, to get started, we set $\sigma_0 = \emptyset.$
	
	In round $n$, first $\PI$ has to play an ordinal $\rho_n < \lflt(\sigma_n)$ and then $\player$ has to play an ordinal $\eta_n$ which is such that $ \sigma_n \smallfrown \eta_n =: \sigma_{n+1}\in \tree,$ where both players are additionally respecting the following rules:
	
	\begin{itemize}
		\item \underline{if $\kappa \leq \lflt(\sigma_n)$ and $\lflt(\sigma_n)$ is a branching cardinality for $\tree$}\\ then $\player$ has to play such that $\eta_n > \rho_n.$ Moreover, if $\lflt(\sigma_n) = \kappa,$ then $\PI$ has to respect $\rho_n < \delta,$
		\item \underline{else} (that is if $\lflt(\sigma_n)$ is either a fixing cardinality for $\tree$ or $\lflt(\sigma_n)< \kappa$), then $\PI$ has to play $\rho_n$ such that  $\sigma_n \smallfrown \rho_n \in \tree$ and $\player$ has to play $\eta_n = \rho_n$.
	\end{itemize}
	$\player$ wins if and only if the branch $b$ through $\tree$ determined by the game (that is\linebreak $b= \bigcup_{n} \sigma_n$) belongs to $B$.
\end{definition}

\begin{definition}
	Let $\tree$ be an acceptable subtree of an $\prFQ$-fully labelled tree $(\fullabtree,\lflt)$, let $\kappa \in \mathcal{R}_\lambda$ be a branching cardinality for $\tree$, let $\delta <\kappa,$ and let $B \subseteq [\tree]$ be a set of branches of~$\tree.$ 
	Suppose that $\Sigma$ is a winning strategy for $\player$ in the corresponding Foreman-Magidor game $\FMGame(\tree,\kappa,\delta,B)$. We say that a node $\elone \in \tree$ is {\it $\Sigma$-compatible}
	\index{compatible@$\Sigma$-compatible}
	if there exists a run of the game in which $\player$ follows the strategy $\Sigma$ and in which the branch determined by the run contains $\elone.$
\end{definition}

\begin{lemma}[Winning lemma, {\cite[Claim in proof of Lemma 9]{foreman-magidor}}]
Fix a regular cardinal $\theta \gg \lambda$ together with a countable elementary  $N_0\prec H_\theta$ with $\mathcal{K}\in N_0$.	Let $\tree$ be an acceptable subtree of an $N_0$-fully labelled tree $(\fullabtree,\lflt)$, let $\kappa \in \mathcal{R}_\lambda$ be a branching cardinality for $\tree$. For every $\delta < \kappa$, let $B^\kappa_\delta$ be the set of those branches $b$ through $\tree$ that satisfy
	\[\sup(\hull^\ast(N_0,b) \cap \kappa) \leq \delta. \]
	Then there are club many $\delta < \kappa$ such that $\player$ has a winning strategy in the game $\game_\delta := \FMGame(\tree,\kappa,\delta,B^\kappa_\delta).$
\end{lemma}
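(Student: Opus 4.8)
The plan is to reduce the winning condition to a statement about $\player$'s individual moves, obtain the desired club by a routine closing-off, and let $\player$ win each $\game_\delta$ by ``chasing'' an elementary submodel. Since $\Hull^\ast(N_0,b)$ only involves the individual values $b(n)$, we have $\Hull^\ast(N_0,b)\cap\kappa=\bigcup_{n<\omega}\bigl(\Hull(N_0,b(n))\cap\kappa\bigr)$; putting $h(\xi):=\sup(\Hull(N_0,\xi)\cap\kappa)$, which is $<\kappa$ because $\Hull(N_0,\xi)$ is countable and $\kappa$ is regular uncountable, we get that $b\in B^\kappa_\delta$ if and only if $h(b(n))\le\delta$ for all $n$. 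So $\player$ wins $\game_\delta$ as soon as it can guarantee that \emph{every} ordinal it plays --- including the moves it is forced to copy from $\PI$ at fixing levels and at branching levels below $\kappa$ --- belongs to $\{\xi:h(\xi)\le\delta\}$. Using the $\theta$ from the statement, fix a wellorder $<^\ast$ of $H_\theta$ and the structure $\mathfrak A=(H_\theta;\in,<^\ast,N_0,\tree,\fullabtree,\lflt,\kappa,\lambda,\cK)$, writing $\Hull^{\mathfrak A}$ for Skolem hulls in $\mathfrak A$, and set $C:=\{\delta<\kappa:\Hull^{\mathfrak A}(\delta)\cap\kappa=\delta\}$. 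Since $\kappa$ is regular uncountable and $\Hull^{\mathfrak A}$ commutes with increasing unions, $C$ is club in $\kappa$; I claim $\player$ wins $\game_\delta$ for every $\delta\in C$.

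Fix $\delta\in C$. Along the run $\emptyset=\sigma_0\subseteq\sigma_1\subseteq\cdots$ the strategy of $\player$ maintains a model $Q_n\prec\mathfrak A$ with $\sigma_n\in Q_n$ and $Q_n\cap\kappa=\delta$, starting with $Q_0:=\Hull^{\mathfrak A}(\delta)$ (which qualifies because $\delta\in C$). In round $n$, with $\sigma_n\in Q_n$: if $\lflt(\sigma_n)$ is a fixing cardinality for $\tree$, the unique successor of $\sigma_n$ in $\tree$ is $\mathfrak A$-definable from $\sigma_n$ and hence lies in $Q_n$; if $\lflt(\sigma_n)$ is a branching cardinality with $\lflt(\sigma_n)<\kappa$, then $\lflt(\sigma_n)\in Q_n\cap\kappa=\delta$, so any legal move $\rho_n<\lflt(\sigma_n)$ of $\PI$ lies in $\delta\subseteq Q_n$; and if $\lflt(\sigma_n)=\kappa$, then the successor set $\{\eta:\sigma_n{\smallfrown}\eta\in\tree\}\in Q_n$ is cofinal in $\kappa$ (it has size $\kappa$, as $\kappa$ is a branching cardinality for $\tree$), so by elementarity its intersection with $Q_n$ is cofinal in $Q_n\cap\kappa=\delta$, and $\player$ can answer $\PI$'s move $\rho_n<\delta$ by some $\eta_n>\rho_n$ in it. In each of these three cases $\sigma_{n+1}\in Q_n$, so $\player$ keeps $Q_{n+1}:=Q_n$, and since $h$ is $\mathfrak A$-definable the move $\eta_n\in Q_n$ satisfies $h(\eta_n)\in Q_n\cap\kappa=\delta$, as required.

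The remaining case is the crux: $\lflt(\sigma_n)=\kappa'$ for a branching cardinality $\kappa'>\kappa$ of $\tree$. Now $\PI$ may play $\rho_n$ cofinally high in $\kappa'$, forcing $\player$ out of $Q_n$, so $\player$ must re-anchor. Write $A_n=\{\eta:\sigma_n{\smallfrown}\eta\in\tree\}$, of size $\kappa'$ and hence cofinal in the regular cardinal $\kappa'$. The mechanism is a counting/definability argument exploiting $|A_n|=\kappa'=\cof(\kappa')>\kappa\ge|Q_n|$: for each Skolem term $G$ and finite tuple $\vec q$ from $Q_n$, the set $\{\gamma<\kappa:\{\eta\in A_n:G(\vec q,\eta)=\gamma\}\text{ is cofinal in }\kappa'\}$ is $\mathfrak A$-definable from parameters in $Q_n$ and hence an element of $Q_n$, so (for instance) the least value cofinally attained by $h$ on $A_n$ is below $\delta$; and, with more care, the $\eta\in A_n$ for which $\Hull^{\mathfrak A}(Q_n\cup\{\eta\})\cap\kappa$ exceeds $\delta$ should form a union of $\le|Q_n|<\kappa'$ sets each bounded in $\kappa'$, hence a set bounded in $\kappa'$. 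Consequently $\player$ can still choose $\eta_n>\rho_n$ in $A_n$ with $h(\eta_n)\le\delta$ and with $Q_{n+1}:=\Hull^{\mathfrak A}(Q_n\cup\{\eta_n\})$ again satisfying $Q_{n+1}\cap\kappa=\delta$ and $\sigma_{n+1}\in Q_{n+1}$, so the induction continues. Playing to the end, $\player$ produces a branch $b=\bigcup_n\sigma_n$ with $h(b(n))\le\delta$ for every $n$, so $b\in B^\kappa_\delta$ and $\player$ has won $\game_\delta$; as $C$ is club, the lemma follows.

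The only real difficulty is this last case: ensuring that $\player$ never gets stuck at a branching node of $\tree$ whose label exceeds $\kappa$, i.e.\ that it can always re-anchor the auxiliary model while keeping the intersection with $\kappa$ pinned at $\delta$. This is precisely the combinatorial heart of \cite[proof of Lemma~9]{foreman-magidor}, and it is here --- and, in the present sketch, only here --- that one must genuinely use the acceptability of $\tree$ and the $N_0$-fully-labelled structure of $(\fullabtree,\lflt)$, in particular clause~(b) of Definition~\ref{def:fully}, which forces the regular cardinals in $\Hull(N_0,\sigma)\cap\mathcal R_\lambda$ (notably $\kappa$ itself, when $\kappa\in N_0$) to recur as labels densely above every node. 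Everything else is routine elementary-submodel bookkeeping.
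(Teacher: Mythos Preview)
Your argument has a genuine gap at precisely the point you flag. The assertion that the $\eta\in A_n$ for which $\Hull^{\mathfrak A}(Q_n\cup\{\eta\})\cap\kappa$ exceeds $\delta$ form a union of $\le|Q_n|$ sets \emph{each bounded in $\kappa'$} is false. Since $A_n\in Q_n$ has size $\kappa'>\kappa$, there is $g:A_n\to\kappa$ in $Q_n$ whose every fiber is cofinal in $\kappa'$ (partition $A_n$ into $\kappa$ cofinal pieces); for this single Skolem term the bad set $\{\eta\in A_n:g(\eta)\in[\delta,\kappa)\}$ contains the cofinal fiber $g^{-1}(\delta)$ and is therefore unbounded. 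So your boundedness mechanism cannot work, and you give no other argument for why $\player$ can simultaneously dodge all such terms. The underlying obstruction is structural: at a $\kappa'$-branching node $\PI$'s demand $\rho_n$ is an arbitrary ordinal below $\kappa'$, uncontrolled by anything in your model $Q_n$, so the invariant $Q_{n+1}\cap\kappa=\delta$ cannot in general be maintained.

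The paper avoids this by arguing indirectly. It assumes a stationary $S\subseteq\kappa$ on which $\PI$ wins, reflects the \emph{whole sequence} $(\Sigma_\alpha:\alpha\in S)$ of $\PI$-strategies into a single $N\prec H_\theta$ with $|N|<\kappa$ and $N\cap\kappa=\delta\in S$, and has $\player$ beat $\Sigma_\delta$ by staying inside $N$. At a node with label $\kappa'>\kappa$, the demand $\Sigma_\delta(\sigma)$ is bounded by $\sup_{\alpha\in S}\Sigma_\alpha(\sigma)$, which is $<\kappa'$ (since $|S|=\kappa<\kappa'=\cof(\kappa')$) and lies in $N$ (definable from $\sigma$ and the sequence), so $\player$ can respond in $N$; determinacy then yields the lemma. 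This is exactly the leverage your direct construction lacks. Finally, clause~(b) of Definition~\ref{def:fully} plays no role here: it is used in the Thinning lemma and in Proposition~\ref{proj stat many models}, not in the Winning lemma, whose proof needs only acceptability of $\tree$ and regularity of the labels.
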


\begin{proof}
	We show that there is no sequence $(\Sigma_{\alpha}:\alpha \in S)$ such that:
	\begin{itemize}
		\item $S \subseteq \kappa$ is stationary,
		\item for every $\alpha \in S,$ $\Sigma_{\alpha}$ is a winning strategy for $\PI$ in the game~$\game_\alpha$.
	\end{itemize}
	Because every game $\game_\alpha$ is determined, this implies the lemma.\\ Suppose, aiming for contradiction, that $(\Sigma_{\alpha}:\alpha \in S)$ were such a sequence. Pick an elementary submodel $N$ of $(H_\theta,\in)$ such that $|N|<\kappa,$ $N\cap\kappa=:\delta \in S$ and moreover, $N_0, (\Sigma_{\alpha}:\alpha \in S), \tree,\lflt \in N.$
	
	We describe a run of the game $\game_{\delta}$ which is won by $\player$ but in which $\PI$ plays as dictated by $\Sigma_{\delta}$. Note that it suffices to ensure that $\player$ only plays ordinals which are in $N$. Suppose this has been accomplished up to position $\sigma \in \tree,$ we indicate how to proceed.\\
	{\underline{Case 1}:} if there exists a unique $\eta$ such that $\sigma\smallfrown\eta\in \tree$, then $\eta\in N$ (since inductively $\sigma\in N$).\\
	{\underline{Case 2}:} if $\lflt(\sigma)<\kappa$, then (since $\lflt(\sigma) \in N$) both $\PI$ and $\player$ play an ordinal $<\delta$, hence both ordinals are in $N$.\\
	{\underline{Case 3}:} if $\lflt(\sigma)>\kappa,$ we need to check that $\lflt(\sigma)\cap N\setminus\Sigma_{\delta}(\sigma)$ is non-empty. This suffices because $N$ knows that for every $\eta < \lflt(\sigma)$, there exists $\eta'\in \lflt(\sigma)\setminus\eta$ such that $\sigma\smallfrown\eta'\in \tree$ and therefore $\player$ can continue playing ordinals in $N.$
	However it is clear that $\lflt(\sigma)\cap N\setminus\Sigma_{\delta}(\sigma)$ is indeed non-empty because it follows from regularity of $\lflt(\sigma)$ that $\sup(\Sigma_\alpha(\sigma):\alpha\in S)\in \lflt(\sigma)\cap N$.\\
	{\underline{Case 4}:} if $\lflt(\sigma)=\kappa,$ note that, since $\Sigma_{\delta}(\sigma)$ is winning for $\PI$ in $\game_{\delta}$, we have $\Sigma_{\delta}(\sigma)<\delta$. Then $\kappa\cap N\setminus \Sigma_{\delta}(\sigma)\neq\emptyset$ and it is possible to find $\eta\in \kappa\cap N \setminus \Sigma_{\delta}(\sigma)$ such that $\sigma\smallfrown\eta\in \tree$.
\end{proof}

\begin{lemma}[Thinning lemma, {\cite[Lemma 9]{foreman-magidor}}]\label{lem:thinning}
Fix a regular cardinal $\theta \gg \lambda$ together with a countable elementary  $N_0\prec H_\theta$ with $\mathcal{K}\in N_0$.	Let $\tree$ be an acceptable subtree of an $N_0$-fully labelled tree $(\fullabtree,\lflt)$. Suppose that $\kappa \in N_0$ 
is a branching cardinality for $\tree.$ Suppose in addition that $S \subseteq \kappa$ is stationary and contains only ordinals of countable cofinality. Then there exists an acceptable subtree  $\tree'$ of $\tree$ such that the following conditions (\ref{itm:lem.1}), (\ref{itm:lem.2}), (\ref{itm:lem.3}) hold:
	\begin{enumerate}
		\item    \label{itm:lem.1}
		$\{ \lflt(\sigma):\sigma \in \tree', \lflt(\sigma) \text{ is branching for }\tree \} \setminus \{\kappa \}$ is the set of branching cardinalities for $\tree'$,
		\item     \label{itm:lem.2}
		$\{ \lflt(\sigma):\sigma \in \tree', \lflt(\sigma) \text{ is fixing for }\tree \} \cup \{\kappa \}$ is the set of fixing cardinalities for $\tree'$,
		\item    \label{itm:lem.3}
		there exists $\delta \in S$ such that
		\[\sup(\hull^\ast(N_0,b)\cap \kappa)= \delta,\]
		for every branch $b$ through $\tree'.$
	\end{enumerate}
\end{lemma}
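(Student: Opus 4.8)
The plan is to run the Winning lemma, pick a winning strategy $\Sigma$ for $\player$ in $\game_\delta$ for a suitable $\delta\in S$, and then carve $\tree'$ out of $\tree$ as a $\Sigma$-guided pruning that collapses every $\kappa$-labelled node to a single successor while keeping every other branching cardinal at full width. \textbf{Step 1.} By the Winning lemma there is a club $C\subseteq\kappa$ so that $\player$ has a winning strategy in $\game_\delta=\FMGame(\tree,\kappa,\delta,B^\kappa_\delta)$ for each $\delta\in C$; since $S$ is stationary I fix $\delta\in S\cap C$, and as $\cof(\delta)=\omega$ I fix a strictly increasing $\langle\delta_k:k<\omega\rangle$ cofinal in $\delta$ and a winning strategy $\Sigma$ for $\player$ in $\game_\delta$. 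I also record that $\kappa$ recurs as a label infinitely often along every branch of $\tree$: since $\kappa\in N_0$ we have $\kappa\in\mathcal{R}_\lambda\cap\hull(N_0,\elone)=\prFQ_{N_0}(\elone)$ for all $\elone$, so clause~(b) of Definition~\ref{def:fully} for $(\fullabtree,\lflt)$ gives, for each $\elone\in\fullabtree$, arbitrarily large $n$ with $\lflt(\eltwo)=\kappa$ for every $\eltwo\in\fullabtree\cap\elone{\uparrow}\cap\lambda^n$; applied to the initial segments of a branch $b$ of $\tree\subseteq\fullabtree$ this yields infinitely many $n$ with $\lflt(b\restriction n)=\kappa$.

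\textbf{Step 2.} I would then define $\tree'$ by recursion on length, attaching to each $\elone\in\tree'$ a finite position $\run_\elone$ of $\game_\delta$ in which $\player$ has followed $\Sigma$ and whose associated node is $\elone$, keeping $\run_\emptyset=\emptyset$ and $\run_\elone\subseteq\run_{\elone\smallfrown\eta}$. Put $\emptyset\in\tree'$; given $\elone\in\tree'$ with position $\run_\elone$, consider cases (exhaustive since $\tree$ is acceptable). \emph{(i)} If $\lflt(\elone)=\kappa$, let $k=|\{\eltwo\subsetneq\elone:\lflt(\eltwo)=\kappa\}|$, set $\eta=\Sigma(\run_\elone\smallfrown\delta_k)$, and put only $\elone\smallfrown\eta$ into $\tree'$, with $\run_{\elone\smallfrown\eta}=\run_\elone\smallfrown\delta_k\smallfrown\eta$. \emph{(ii)} If $\lflt(\elone)$ is fixing for $\tree$, take the unique $\eta$ with $\elone\smallfrown\eta\in\tree$ and put $\elone\smallfrown\eta\in\tree'$ with $\run_{\elone\smallfrown\eta}=\run_\elone\smallfrown\eta\smallfrown\eta$. \emph{(iii)} If $\lflt(\elone)<\kappa$ is branching for $\tree$, keep all $\tree$-successors, each with $\run_{\elone\smallfrown\eta}=\run_\elone\smallfrown\eta\smallfrown\eta$. \emph{(iv)} If $\lflt(\elone)>\kappa$ is branching for $\tree$, put $\elone\smallfrown\Sigma(\run_\elone\smallfrown\rho)$ into $\tree'$ for every $\rho<\lflt(\elone)$, and for each successor $\elone\smallfrown\eta$ set $\run_{\elone\smallfrown\eta}=\run_\elone\smallfrown\rho\smallfrown\eta$ with $\rho$ least such that $\Sigma(\run_\elone\smallfrown\rho)=\eta$. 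A short check against the rules of $\game_\delta$ shows that each displayed triple really is a legal position with $\player$ respecting $\Sigma$ (at a $\kappa$-node $\delta_k<\delta<\kappa$ is a legal move for $\PI$; otherwise $\PI$'s move is either forced to equal $\player$'s or an arbitrary ordinal below $\lflt(\elone)$, and $\player$'s move is either forced or given by $\Sigma$). Thus $\tree'$ is a subtree of $\tree$ in which every $\kappa$-labelled node has a unique successor, every $\kappa'$-labelled node with $\kappa'<\kappa$ branching for $\tree$ keeps its $\kappa'$ successors, every $\kappa'$-labelled node with $\kappa'>\kappa$ branching for $\tree$ has a successor set unbounded in, hence of size, $\kappa'$, and fixing cardinals of $\tree$ remain fixing; this makes $\tree'$ an acceptable subtree of $\tree$ and yields (\ref{itm:lem.1}) and (\ref{itm:lem.2}).

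\textbf{Step 3.} For (\ref{itm:lem.3}), given a branch $b$ through $\tree'$, the union $\run_b=\bigcup_n\run_{b\restriction n}$ is a full run of $\game_\delta$ in which $\player$ follows $\Sigma$ and whose branch is $b$, so winning of $\Sigma$ gives $b\in B^\kappa_\delta$, i.e.\ $\sup(\hull^\ast(N_0,b)\cap\kappa)\leq\delta$. For the reverse inequality I use Step~1: there are infinitely many lengths $m_0<m_1<\cdots$ with $\lflt(b\restriction m_k)=\kappa$, and by construction $\PI$ played $\delta_k$ at the $k$-th of these, so $b(m_k)=\Sigma(\run_{b\restriction m_k}\smallfrown\delta_k)>\delta_k$, while $b(m_k)<\kappa$ since $(b\restriction m_k)\smallfrown b(m_k)\in\fullabtree$ with $\lflt(b\restriction m_k)=\kappa$ (clause~(a) of Definition~\ref{def:fully}). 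Since $\mathcal{K}\in N_0$ gives $\lambda\in N_0$, each $b(m_k)$, being an entry of $b$ below $\kappa$, lies in $\hull^\ast(N_0,b)\cap\kappa$; hence $\sup(\hull^\ast(N_0,b)\cap\kappa)\geq\sup_k\delta_k=\delta$, and therefore $\sup(\hull^\ast(N_0,b)\cap\kappa)=\delta\in S$ for every branch $b$ of $\tree'$.

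\textbf{Main obstacle.} The delicate part is Step~2: one has to make $\kappa$ fixing, keep full width at every other branching cardinal, and simultaneously force the virtual $\PI$-moves at the $\kappa$-nodes to be cofinal in $\delta$, so that (\ref{itm:lem.3}) holds with equality and not merely with ``$\leq$''. Threading the partial positions $\run_\elone$ through the recursion is what makes the strategy-bookkeeping coherent even when $\Sigma$ cannot be taken positional, and the recurrence of $\kappa$ infinitely often along every branch --- from Definition~\ref{def:fully}(b) together with $\kappa\in N_0$ --- is precisely what makes room to feed in the sequence $\langle\delta_k:k<\omega\rangle$.
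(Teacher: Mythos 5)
Your proposal is correct and follows essentially the same route as the paper: apply the Winning lemma to get $\delta\in S$ and a winning strategy $\Sigma$, fix a cofinal $\omega$-sequence in $\delta$, prune $\tree$ level by level so that $\kappa$-labelled nodes keep only $\Sigma$'s response to the next $\delta_k$, labels $>\kappa$ keep all $\Sigma$-responses, labels $<\kappa$ and fixing labels are untouched, and then verify (\ref{itm:lem.1})--(\ref{itm:lem.3}) exactly as you do, using Definition~\ref{def:fully}(b) with $\kappa\in\Hull(N_0,\sigma)$ for the lower bound $\geq\delta$. Your explicit threading of partial runs $\run_\sigma$ is just a more careful bookkeeping of what the paper phrases via ``$\Sigma$-compatible'' nodes and ``the response of $\Sigma$ in position $\sigma$'', so it is a faithful (indeed slightly more rigorous) rendering of the same argument.
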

\begin{proof}
	Using the Winning lemma, pick $\delta\in S$ and a winning strategy~$\Sigma$ for $\player$ in the game $\game_{\delta}=\FMGame(\tree,\kappa,\delta,B^\kappa_\delta)$. Fix a sequence $(\delta_m:m<\omega)$ which is increasing and cofinal in $\delta$.
	We define $\tree'\cap\lambda^n$ recursively for $n<\omega$.
	
	Let $\sigma\smallfrown \eta\in \tree\cap \lambda^{n+1}$. If $\sigma\notin \tree'\cap\lambda^n$ or $\sigma$ is not $\Sigma$-compatible, then we decide that $\sigma\smallfrown\eta\notin \tree'\cap\lambda^{n+1}$. Else, we separate different cases:
	\begin{itemize}
		\item if $\lflt(\sigma)$ is a fixing cardinality for $\tree$, then $\{ \eta'\in\lambda : \sigma\smallfrown \eta'\in \tree \}=\{\eta\}$ and we let $\sigma\smallfrown\eta\in \tree'$,
	\end{itemize}
	now suppose $\lflt(\sigma)$ is a branching cardinality for $\tree$:
	\begin{itemize}
		\item if $\lflt(\sigma)<\kappa$, then we let $\sigma\smallfrown\eta\in \tree'$,
		\item if $\lflt(\sigma)=\kappa$, let $\sigma\smallfrown\eta\in \tree'\cap\lambda^{n+1}$ if and only if $\eta$ is the response of $\Sigma$ to $\PI$ playing $\delta_{n}$ in position $\sigma$ in the game $\game_{\delta},$
		\item if $\lflt(\sigma)>\kappa$, let $\sigma\smallfrown\eta\in \tree'\cap\lambda^{n+1}$ if and only if there exists $\rho < \lflt(\sigma)$ such that $\eta$ is the response of $\Sigma$ to $\PI$ playing $\rho$ in position $\sigma$.
	\end{itemize}
	We now check that $\tree'$ satisfies \ref{lem:thinning}(\ref{itm:lem.1}), \ref{lem:thinning}(\ref{itm:lem.2}) and \ref{lem:thinning}(\ref{itm:lem.3}).
	\bigskip
	
	\noindent
	\underline{\ref{lem:thinning}(\ref{itm:lem.1})}: if $\sigma\in \tree'$ and $\lflt(\sigma)$ is a branching cardinality for $\tree,$ not equal to $\kappa$, then either:
	\begin{itemize}
		\item $\lflt(\sigma)<\kappa$ and $ \{ \eta<\lambda : \sigma\smallfrown \eta\in \tree \}=\{ \eta < \lambda : \sigma\smallfrown\eta\in \tree'  \},$ so	$\lflt(\sigma)$ is also a branching cardinality for $\tree'$,
		\item $\lflt(\sigma)>\kappa$ and then $ \{ \eta<\lambda : \sigma\smallfrown \eta\in \tree' \}$ is cofinal in $\lflt(\sigma)$ since $\Sigma$ is winning for $\player$.
	\end{itemize}
	
	\noindent
	\underline{\ref{lem:thinning}(\ref{itm:lem.2})}: if $\sigma\in \tree'$ and $\lflt(\sigma)$ is a fixing cardinality for $\tree$, then it is also a fixing cardinality for $\tree',$ because  there is a unique $\eta\in \lambda$ such that $\sigma\smallfrown\eta\in \tree$ and this $\sigma\smallfrown\eta$ is also an element of $\tree'$. 
	
	$\kappa$ is also a fixing cardinality for $\tree'$ because if $\sigma\in \tree'$ and $\lflt(\sigma)=\kappa$, then there exists a unique $\eta$ such that $\sigma\smallfrown\eta\in \tree'$.
	
	\noindent
	\underline{\ref{lem:thinning}(\ref{itm:lem.3})}: let $b$ be a branch through $\tree'$, then there exists a run $z$ of the game $\game_{\delta}$ in which $\player$ follows $\Sigma$ and such that $z$ determines $b$.
	
	Then, because $\Sigma$ is winning, $\hull^\ast(N_0,b)\cap \kappa\subseteq \delta$, so it suffices to show that for every $n<\omega$, there exists $k<\omega$ such that $b(k)\in\kappa\setminus\delta_n.$
	
	Because $\tree'$ is an acceptable subtree of an $N_0$-fully labelled tree, there exists $k\geq n$ such that for every $\sigma\in \tree'\cap\lambda^{k}$, $\lflt(\sigma)=\kappa.$ Then $b(k)$ is the response of $\Sigma$ to $\PI$ playing $\delta_k$ in position $b\upharpoonright k$ in the game $\game_{\delta},$ hence $b(k)\in \kappa \setminus  \delta_k$.
\end{proof}

\begin{definition} \label{defnambagame}
Given a regular cardinal $\theta \gg \lambda$ and $\lambda \in M \prec H_\theta$, the multiple Namba game $\Nambagamezero{M}{\mathcal{K}}$ for $(M, \mathcal{K})$ proceeds as follows.
Let $M_0 = M.$ In round $n< \omega,$ $\PI$ plays a pair $(\kappa_n, \nu_n),$ with $\kappa_n \in \mathcal{K} \cap M_n$ and $\nu_n \in \kappa_n.$ $\player$ has to answer by playing an ordinal $\xi_n \in \kappa_n \setminus \nu_n$ such that
\[M_{n+1} \cap \omega_1 = M_n \cap \omega_1,\]
with $M_{n+1}$ defined in the following way:
\[ M_{n+1} = \hull(M_n, \xi_n).\]
$\player$ loses in round $n$ of the game if $\player$ fails to play such $\xi_n,$ else the game continues. $\player$ wins the game if $\player$ does not lose in any round $n < \omega $ of the game.

\end{definition}

\begin{notation}
	When $\run$ is a finite run of the game $\Nambagamezero{M}{\mathcal{K}}$ consisting of $n$ rounds, let $\player(\run)$ denote the sequence $(\xi_i : i<n)$ of moves played by $\player$ in $\run.$
\end{notation}

\begin{notation}\label{not1}
	If $\run$ is a finite run of the game $\Nambagamezero{M}{\mathcal{K}}$, consisting of $n$ rounds, then $\run$ defines a model $M_n,$ namely the $n$-th model constructed during the game $\Nambagamezero{M}{\cK}$ as described in the above Definition~\ref{defnambagame}. We then denote by $M \ltimes \run$ this model $M_n =  \hull(M, \player(\run)).$
\end{notation}

We are now ready to prove the following Proposition \ref{proj stat many models}.

\begin{proposition} \label{proj stat many models} For every set of uncountable regular cardinals $\mathcal{K} \in H_\theta$ with $\omega_1 \notin \mathcal{K},$ the set of countable elementary submodels $M\prec H_\theta$ for which $\player$ has a winning strategy in the corresponding generalised Namba game $\Nambagamezero{M}{\mathcal{K}}$ is a projective stationary subset of~$[H_\theta]^\omega.$
\end{proposition}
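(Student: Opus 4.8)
The plan is to unwind the definition of projective stationarity and construct the required models by hand, driving $\player$'s strategy with the Foreman--Magidor apparatus recalled above. Fix a stationary $S \subseteq \omega_1$ and a function $F \colon H_\theta^{<\omega} \to H_\theta$; it suffices to find a countable $M \prec H_\theta$ closed under $F$, with $\lambda \in M$ and $M \cap \omega_1 \in S$, for which $\player$ wins $\Nambagamezero{M}{\mathcal K}$. Since the countable $M \prec H_\theta$ that are closed under $F$ and contain $\{\mathcal K, \lambda, \mathcal R_\lambda, F, S\}$ form a club in $[H_\theta]^\omega$, while $\{M : M \cap \omega_1 \in S\}$ is stationary because $S$ is, we may fix such an $M =: N_0$ with $N_0 \cap \omega_1 \in S$. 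It remains only to equip $\player$ with a winning strategy in $\Nambagamezero{N_0}{\mathcal K}$; the construction below will in fact apply to any countable $N_0 \prec H_\theta$ with $\mathcal K, \lambda, \mathcal R_\lambda \in N_0$.

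Fix an $N_0$-fully labelled tree $(\fullabtree, \lflt)$ (these exist, by the remark after Definition~\ref{def:fully}); note that $\fullabtree$ is an acceptable subtree of itself each of whose labels is a branching cardinality, since a cofinal subset of a regular cardinal $\lflt(\elone)$ has cardinality $\lflt(\elone)$. $\player$'s strategy consists of iterating the Thinning lemma (Lemma~\ref{lem:thinning}) round by round. Keeping a decreasing chain of acceptable subtrees $\fullabtree = \tree_0 \supseteq \tree_1 \supseteq \cdots$ and a growing node, $\player$ arranges after round $n$ that: $\kappa_0, \dots, \kappa_{n-1}$ (the cardinals $\PI$ has played) are fixing cardinalities for $\tree_n$; a node $\sigma_n \in \tree_n$ has as entries all of $\player$'s prior moves and satisfies $M_n \subseteq \hull(N_0, \sigma_n)$ (where $M_n = N_0 \ltimes \run_n$ is the model built so far, Notation~\ref{not1}) and $\hull(N_0, \sigma_n) \cap \omega_1 = N_0 \cap \omega_1$. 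When $\PI$ plays $(\kappa_n, \nu_n)$ in round $n$, observe $\kappa_n \in \mathcal K \cap M_n \subseteq \hull(N_0, \sigma_n)$; assuming $\kappa_n < \lambda$ (so $\kappa_n \in \prFQ_{N_0}(\sigma_n)$ by definition; the remaining case $\kappa_n = \lambda \in \mathcal K$ is treated by admitting $\lambda$ itself among the labels), clause~(b) of Definition~\ref{def:fully} puts $\kappa_n$ as the common label of whole levels of $\tree_n$ lying above $\sigma_n$, arbitrarily far up, and exhibits $\kappa_n$ as a branching cardinality for $\tree_n$ (when $\PI$ repeats a previously played cardinal, additional care is needed to re-branch $\tree_n$ above $\sigma_n$). $\player$ applies Lemma~\ref{lem:thinning} to this branching cardinality $\kappa_n$ — relativized to the elementary submodel $\hull(N_0, \sigma_n)$, which by the invariant has the same $\omega_1$ as $N_0$, so as to accommodate $\kappa_n \notin N_0$ — and to the stationary set $\{\delta < \kappa_n : \cof(\delta) = \omega,\ \delta > \nu_n\}$ (stationary in $\kappa_n$, since $\omega_1 \notin \mathcal K$ forces $\kappa_n \geq \omega_2$), obtaining a further acceptable subtree $\tree_{n+1} \subseteq \tree_n$ in which, by clauses (1)--(2) of the lemma, $\kappa_n$ together with all earlier $\kappa_i$ is fixing, and an ordinal $\delta_n \in (\nu_n, \kappa_n)$ of countable cofinality with $\sup(\hull^\ast(N_0, b) \cap \kappa_n) = \delta_n$ for every branch $b$ of $\tree_{n+1}$ through $\sigma_n$. $\player$ then picks a level $k_n > |\sigma_n|$ on which $\tree_{n+1}$ is constantly labelled $\kappa_n$ and whose node-entry above $\sigma_n$ exceeds $\nu_n$ — possible because these entries are cofinal in $\delta_n > \nu_n$ — lets $\sigma_{n+1}$ be the corresponding node, and plays $\xi_n := \sigma_{n+1}(k_n)$; then $\xi_n \in \kappa_n \setminus \nu_n$ and $M_{n+1} = \hull(M_n, \xi_n) \subseteq \hull(N_0, \sigma_{n+1})$.

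The crux — where essentially all of the difficulty concentrates — is verifying that the invariant $\hull(N_0, \sigma_{n+1}) \cap \omega_1 = N_0 \cap \omega_1$ survives, i.e., that travelling from $\sigma_n$ up to $\sigma_{n+1}$ through the fixing levels of $\tree_{n+1}$ and terminating at a $\kappa_n$-labelled entry introduces no new countable ordinal into the Skolem hull. This is exactly the content of the Winning lemma: along $\tree_{n+1}$ the branch is compelled to keep $\sup(\hull^\ast(N_0, b) \cap \kappa_n)$ pinned at $\delta_n$, so no countable ordinal can be coded out of the $\kappa_n$-coordinates; the fixing coordinates contribute nothing, their unique successors being computed inside the ambient hull; and the cardinals $\kappa_0, \dots, \kappa_{n-1}$ already handled remain under control since $\tree_{n+1} \subseteq \tree_n$. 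Consequently $\player$ never loses a round, whence $M_n \cap \omega_1 \subseteq \hull(N_0, \sigma_n) \cap \omega_1 = N_0 \cap \omega_1$ for every $n$ (the reverse inclusion being trivial), and $\player$'s strategy is winning.

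Since $S$ and $F$ were arbitrary, this shows that the set of countable $M \prec H_\theta$ for which $\player$ wins $\Nambagamezero{M}{\mathcal K}$ is projective stationary in $[H_\theta]^\omega$.
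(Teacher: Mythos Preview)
Your argument has a genuine gap, and the overreach is visible already in the first paragraph: you assert that the construction ``will in fact apply to any countable $N_0 \prec H_\theta$ with $\mathcal K, \lambda, \mathcal R_\lambda \in N_0$.'' That would show the set in question contains a club, hence that $\strNamforczero{\cK}$ is semiproper, which is false in general (cf.\ Section~\ref{S.Games} and the discussion in the final section of the paper). So something must fail, and it fails exactly at the ``crux'' paragraph.

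Thinning at $\kappa_n \in \mathcal K$ controls $\sup(\hull^\ast(N_0,b)\cap\kappa_n)$; it tells you nothing about $\hull^\ast(N_0,b)\cap\omega_1$. Your claim that ``no countable ordinal can be coded out of the $\kappa_n$-coordinates'' is simply not what clause~(\ref{itm:lem.3}) of Lemma~\ref{lem:thinning} says. Moreover, between $\sigma_n$ and the level $k_n$ you pick, the tree $\tree_{n+1}$ still has \emph{branching} levels labelled by cardinals you have not yet thinned at --- in particular $\omega_1$ itself, which you never touch --- so ``the corresponding node'' $\sigma_{n+1}$ is not even well-defined, and any choice you make at an $\omega_1$-labelled level will typically throw new countable ordinals into $\hull(N_0,\sigma_{n+1})$. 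Finally, your remark that fixing levels ``contribute nothing'' because ``their unique successors [are] computed inside the ambient hull'' presupposes that the thinned tree $\tree_{n+1}$ lies in the hull, which it does not: it was built using $\PI$'s moves and the Thinning lemma applied externally.

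The paper's proof avoids all of this by applying the Thinning lemma \emph{once}, at the outset, to the cardinal $\omega_1$ (which is branching in the original $N_0$-fully labelled tree). This produces a single acceptable subtree $\tree$ in which $\omega_1$ is fixing while every $\kappa\in\mathcal K$ remains branching, together with a specific $\delta\in S$ such that $\hull^\ast(N_0,b)\cap\omega_1=\delta$ for \emph{every} branch $b$ of $\tree$. The model for which $\player$ wins is then $M:=\hull^\ast(N_0,\delta)$, not $N_0$; and $\player$'s strategy simply climbs $\tree$, using the still-branching $\kappa_n$-levels to beat $\PI$'s bounds $\nu_n$, while the $\omega_1$-control is guaranteed globally by the single thinning step rather than re-argued each round.
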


\begin{proof}
	Let $S \subseteq \omega_1$ be stationary with $\hull^\ast(N_0,\delta)=\delta$ for all $\delta\in S$. It suffices to show that for any countable  $N_0 \prec H_\theta$ with $\cK \in N_0$, there exists
	$\delta\in S$ such that 
	$\player$ has a winning strategy in the generalised Namba game $\Nambagamezero{\hull^\ast(N_0,\delta)}{\mathcal{K}}$.
	
	Let $(\fullabtree,\ell)$ be an $N_0$-fully labelled tree and use the Thinning lemma (Lemma~\ref{lem:thinning}) to find an acceptable subtree $\tree$ of $\fullabtree$ such that
	\begin{itemize}
		\item  $\omega_1$ is a fixing cardinality for $\tree,$
		\item  every element of $\mathcal{K}$ is a branching cardinality for $\tree,$
		\item there exists an ordinal $\delta \in S$ such that for every branch $b$ through $\tree$ it is the case that $\hull^\ast(N_0,b) \cap \omega_1 = \delta.$
	\end{itemize}
	Let $\displaystyle M=\hull^\ast(N_0,\delta)=\bigcup_{n<\omega}\hull(N_0,\delta_n)$, where $(\delta_n:n<\omega)$ is cofinal in~$\delta.$ Then $ M\cap\omega_1=\delta \in S$ and what remains is to describe a winning strategy for $\player$ in the multiple Namba game on $M.$ 
	
	Suppose $\PI$ starts by playing $(\kappa_0,\nu_0)$. Then $\kappa_0\in M$, so there exists $\tau_0\in \tree$ such that $\kappa_0\in \hull(N_0,\tau_0).$
	
	Then, there exists $\tilde\sigma_0\in \tree\cap\tau_0{\uparrow}$ such that $\lflt(\tilde\sigma_0)=\kappa_0$. $\player$ answers \[\xi_0=\min\{ \eta < \kappa_0\setminus \nu_0: \tilde\sigma_0\smallfrown\eta \in \tree\}.\]
	Put $\sigma_0=\tilde\sigma_0\smallfrown \xi_0$.
	
	Suppose at move $n>0$, $\PI$ plays $(\kappa_n,\nu_n)$. Then for certain $k<\omega$, \[\kappa_n\in\hull(N_0, \{\delta_k,\xi_0,\ldots,\xi_{n-1}\}),\] so there exists $\tau_n\in \tree\cap\sigma_{n-1} \!{\uparrow}$ such that $\kappa_n\in\hull(N_0,\tau_n)$.
	
	There in turn exists $\tilde\sigma_n\in \tree\cap \tau_n {\uparrow}$ such that $\lflt(\tilde\sigma_n)=\kappa_n$. $\player$ answers
	\[\xi_n=\min\{ \eta < \kappa_n\setminus \nu_n : \tilde\sigma_n\smallfrown\eta \in \tree  \}.\] Put $\sigma_n=\tilde\sigma_n\smallfrown \xi_n.$
	
	If $z=((\nu_n:n<\omega),(\xi_n:n<\omega))$ is the resulting run of the game, then the corresponding $(\sigma_n:n<\omega)$ determines a branch $b$ of $\tree$ such that \[ \bigcup_{n<\omega}\hull(M,\xi_n) \subseteq \hull^\ast(N_0,b).\]
	From which it follows that
	\[\bigcup_{n<\omega} \hull(M,\xi_n)\cap\omega_1=\delta.\]
\end{proof}

 \section{Multiple Namba forcing}
\label{S.Multiple Namba forcing}

Recall that we have fixed a set of uncountable regular cardinals $\mathcal{K}$ not containing~$\omega_1$ and $\lambda = \sup \cK.$ We now also fix some $\mu\gg\lambda$ regular. We denote by $\mathcal{W}_0$ the set of all countable models $M \prec H_{\mu}$ with $\cK \in M$ for which $\player$ has a winning strategy in the game 
$\Nambagamezero{M}{\mathcal{K}}$.
As an application of the multiple Namba game $\Nambagamezero{N}{\mathcal{K}}$, we construct a forcing $\strNamforczero{\cK}$ that is strongly stationary set preserving and collapses $|H_\mu|=2^{<\mu}$ to $\omega_1$ in such a way that for every $\kappa \in \cK,$
\[ \Vdash_{\strNamforczero{\cK}}\  \cof(\kappa) = \omega.\] 
The forcing $\strNamforczero{\cK}$ has size $2^{<\mu}$ and therefore forces $(2^{<\mu})^+$ to become the new~$\omega_2.$

\begin{definition}
	A $\strNamforczero{\cK}$-condition is a finite set $p$ that satisfies the following:
	
	\begin{itemize}
		\item every element of $p$ is a three-tuple $(M, \run,\Sigma)$ with $M \in \mathcal{W}_0,$ $\Sigma$ a winning strategy for $\player$ in the game $\Nambagamezero{N}{\mathcal{K}}$ and $\run$ a finite run of the game $\Nambagamezero{M}{\mathcal{K}}$ in which $\player$ follows $\Sigma$,
		\item every two elements $(M_1, \run_1,\Sigma_1)$ and $(M_2, \run_2,\Sigma_2)$ of $p$ that satisfy\linebreak $M_1 \cap \omega_1 = M_2 \cap \omega_1$ are equal,
		\item for every two elements $(M_1, \run_1,\Sigma_1)$ and $(M_2, \run_2,\Sigma_2)$ of $p,$ if\linebreak
		$M_1 \cap \omega_1 < M_2 \cap \omega_1$ then $(M_1, \run_1,\Sigma_1) \in M_2 \ltimes \run_2.$
	\end{itemize}
	If $p,q$ are two $\strNamforczero{\cK}$-conditions, then $q \leq p$ if and only if:
	\begin{itemize}
		\item[]
		for every $(M, \run,\Sigma) \in p$ there exists $(M',\run',\Sigma') \in q$ such that\linebreak $M = M'$, $ \Sigma = \Sigma'$ and the finite run  $\run'$ of the game is an extension of the run $\run.$
	\end{itemize}
\end{definition}

\begin{notation}\label{not3.2}
If  $G \subseteq \strNamforczero{\cK}$ is a $V$-generic filter, define $C^G$ to be the set of all ordinals $\delta < \omega_1$ for which there exists $(M,\run, \Sigma) \in p$ with $p \in G$ such that $\delta = M\cap \omega_1.$  Note that $C^G$ is clearly unbounded in $\omega_1,$ so let $(\delta_\alpha^G : \alpha < \omega_1)$ denote the increasing enumeration of $C^G.$ Furthermore, for every $\alpha < \omega_1,$ define $M_\alpha^G$ to be the union of all models of the form $M \ltimes \run$ with $(M,\run, \Sigma) \in p \in G$ and $M\cap \omega_1=\delta_\alpha^G$.
\end{notation}

\begin{proposition}\label{itnambaforcing lem1}

$\strNamforczero{\cK}$ forces that for every $\kappa \in \cK,$ for all but countably many $\alpha < \omega_1$, the $\omega$-sequence 		
		\[ \mathsmaller\bigcup \{ \player(\run) : (M,\run,\Sigma) \in p \in G, M\cap \omega_1 = \delta_\alpha^G\}\]
		is cofinal in $\kappa.$
\end{proposition}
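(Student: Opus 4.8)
The plan is to work throughout in the extension $V[G]$. Since $\cK\in V$ there is no genuine generic quantification over $\kappa$, so I fix $\kappa\in\cK$ and call $\alpha<\omega_1$ \emph{$\kappa$-active} if $\kappa\in M_\alpha^G$. I will show (i) that all but countably many $\alpha$ are $\kappa$-active, and (ii) that for every $\kappa$-active $\alpha$ the displayed sequence at level $\alpha$ — which is an increasing union, since the run of a fixed model can only grow as one descends through $G$ — is cofinal in $\kappa$. The combinatorial content is entirely in (ii).

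For (i): first, $D_\kappa=\{q\in\strNamforczero{\cK}:\text{some }(M,\run,\Sigma)\in q\text{ has }\kappa\in M\}$ is dense. Given $p$, Proposition~\ref{proj stat many models} yields $N\in\mathcal{W}_0$ with $\kappa,\cK,p\in N$; fixing a winning $\player$-strategy $\Sigma_N$ in $\Nambagamezero{N}{\cK}$, the set $p\cup\{(N,\emptyset,\Sigma_N)\}$ is a condition below $p$ in $D_\kappa$ — since $p\in N$ is finite, its triples and their models lie in $N=N\ltimes\emptyset$, which gives both the distinctness clause and coherence. So there is a $\kappa$-active level; let $\beta_\kappa$ be the least. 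If $\alpha\geq\beta_\kappa$, take a common refinement $q\in G$ of a condition carrying a triple at level $\delta_{\beta_\kappa}^G$ witnessing $\kappa\in M_{\beta_\kappa}^G$ and a condition carrying some triple at level $\delta_\alpha^G$; since $\delta_{\beta_\kappa}^G<\delta_\alpha^G$, the coherence clause places the extended level-$\delta_{\beta_\kappa}^G$ triple of $q$ inside the model $M'\ltimes\run'$ determined by $q$'s level-$\delta_\alpha^G$ triple $(M',\run',\Sigma')$, and being countable that triple is a subset of $M'\ltimes\run'$, so $\kappa\in M'\ltimes\run'\subseteq M_\alpha^G$. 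Hence the non-$\kappa$-active levels are all $<\beta_\kappa$, so countably many.

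For (ii): fix a $\kappa$-active $\alpha$ and $\gamma<\kappa$; it is enough to find a condition in $G$ carrying a triple $(M,\run,\Sigma)$ with $M\cap\omega_1=\delta_\alpha^G$ and some $\player$-move of $\run$ in $\kappa\setminus\gamma$, then let $\gamma$ vary. Since $\kappa\in M_\alpha^G$, pick $q'\in G$ carrying such a triple $(M,\run_1,\Sigma)$ with $\kappa\in M\ltimes\run_1$; I claim the relevant conditions are dense below $q'$, after which, as $q'\in G$, genericity supplies the required condition. Let $q''\leq q'$ with corresponding triple $(M,\run_2,\Sigma)$, $\run_2\supseteq\run_1$, and assume no $\player$-move of $\run_2$ lies in $\kappa\setminus\gamma$. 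Enumerate the triples of $q''$ as $(M_i,\run_i,\Sigma_i)$, $i<m$, in strictly increasing order of $M_i\cap\omega_1$, with $M_{i_0}=M$; then $\kappa\in M_i\ltimes\run_i$ for all $i\geq i_0$ (for $i=i_0$ since $\run_2\supseteq\run_1$, and for $i>i_0$ since $(M,\run_2,\Sigma)\in M_i\ltimes\run_i$ is a countable element, whence $M\ltimes\run_2\subseteq M_i\ltimes\run_i$). Now extend exactly the triples at levels $\geq i_0$, processing $i$ from $m-1$ down to $i_0$: in $\Nambagamezero{M_i}{\cK}$ at position $\run_i$ have $\PI$ play $(\kappa,\rho_i)$, where $\rho_{m-1}=\gamma$ and, for $i<m-1$, $\rho_i:=\xi_{i+1}$ is the move $\player$ has just made (following $\Sigma_{i+1}$) in the extension of the level-$(i{+}1)$ triple; let $\xi_i$ be $\Sigma_i$'s response and put $\run_i':=\run_i\smallfrown((\kappa,\rho_i),\xi_i)$. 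This $\PI$-move is legal since $\kappa\in\cK\cap(M_i\ltimes\run_i)$ and $\rho_i<\kappa$ — crucially, Definition~\ref{defnambagame} permits $\PI$'s second coordinate to be any ordinal $<\kappa$, not only one in $M_i\ltimes\run_i$ — and $\xi_i\in\kappa\setminus\rho_i$ is legal because $\Sigma_i$ is winning. Thus $\gamma=\rho_{m-1}\leq\xi_{m-1}=\rho_{m-2}\leq\xi_{m-2}\leq\cdots\leq\xi_{i_0}$, so $\xi_{i_0}\in\kappa\setminus\gamma$. Finally, letting $q$ be the resulting set of triples (levels $<i_0$ untouched), $q$ is a condition: for $i<j$ with $j\geq i_0$ the new data $\kappa,\rho_i,\xi_i$ of the level-$i$ triple all lie in $M_j\ltimes\run_j'$ — $\kappa$ and the old triple $(M_i,\run_i,\Sigma_i)$ already do; $\rho_i=\xi_{i+1}$ is a $\player$-move of the level-$(i{+}1)$ triple, hence an element of $M_{i+1}\ltimes\run_{i+1}'\subseteq M_j\ltimes\run_j'$; and $\xi_i$ is $\Sigma_i$ applied to $\run_i$ with $(\kappa,\rho_i)$ appended, so it is computed within $M_j\ltimes\run_j'$ — whence $(M_i,\run_i',\Sigma_i)\in M_j\ltimes\run_j'$ and coherence is preserved (the remaining instances are unchanged). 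As $q\leq q''$ and $(M,\run_{i_0}',\Sigma)\in q$, the density claim follows.

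I expect the bookkeeping in (ii) to be the main obstacle: one must extend an entire $\in$-chain of side conditions at once while keeping every newly played $\player$-move above $\gamma$ and simultaneously preserving the coherence clause $(M_i,\run_i,\Sigma_i)\in M_j\ltimes\run_j$. The device reconciling these is to feed each model, as $\PI$'s parameter, the ordinal just produced by the next larger model in the chain: this is a legal $\PI$-move exactly because Definition~\ref{defnambagame} does not require $\PI$'s second coordinate to lie in the current model, and that ordinal already belongs to the larger model's updated hull, which is precisely what coherence demands.
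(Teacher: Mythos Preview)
Your proof is correct and follows essentially the same approach as the paper: both arguments process the $\in$-chain of triples at and above the level of $M$ from top to bottom, having $\PI$ play $(\kappa,\rho_i)$ where $\rho_i$ is the $\player$-response just produced at the next higher level, so that the resulting $\xi_i$'s are all $\geq\gamma$ and the updated lower triples land in the updated higher hulls. Your write-up is more explicit than the paper's---you separate out part~(i) and spell out the coherence verification---but the combinatorial device is identical.
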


\begin{proof}
	Given $\kappa \in \cK,$ let $p \in \strNamforczero{\cK}$ such that there exists  $(M,\run,\Sigma) \in p$ with $\kappa \in M$ (by Proposition \ref{proj stat many models} such $p$ are dense in $\strNamforczero{\cK}$). It suffices to prove that for every $\nu \in \kappa$ there exists a $\strNamforczero{\cK}$-condition $q \leq p$ for which there is $(M,\run_M^q,\Sigma_M^q) \in q$ with $ \ran(\player(\run_M^q)) \cap \kappa \setminus \nu \neq \emptyset.$ Let $(N_i,\run_i,\Sigma_i) \in p$ for $i<k$ enumerate the elements of 
	\[\{ (N,\run^p_N,\Sigma^p_N) \in p: M\cap \omega_1 \leq N\cap \omega_1\}, \] with $N_i\cap \omega_1 \leq N_{i+1}\cap \omega_1$ for each $i < k-1.$
	Let $p_{k-1}$ be the $\strNamforczero{\cK}$-condition strengthening $p$ that is obtained by removing from $p$ the triple $(N_{k-1},\run_{k-1},\Sigma_{k-1})$ and adding the triple $(N_{k-1},\run_{k-1}^*,\Sigma_{k-1})$, where $\run_{k-1}^*$ is the extension of the game-run $\run_{k-1}$ by one round in which $\PI$ plays $(\kappa,\nu)$ and $\player$ answers following the strategy $\Sigma_{k-1}.$ If $k=1$ then the condition $p_{k-1}$ already has the required property. Else, go on to define $\strNamforczero{\cK}$-conditions $p_{k-1} \geq p_{k-2} \geq \ldots \geq p_0$ and finite game-runs $\run_{k-1}^*,\run_{k-2}^*,\ldots,\run_{0}^*$ as follows.  
	Having defined the condition $p_{i+1}$ and game-run $\run_{i+1}^*,$ let $p_i$ be the condition obtained by removing from $p_{i+1}$ the triple $(N_i,\run_i,\Sigma_i)$ and adding the triple $(N_i,\run_i^*,\Sigma_i)$, where $\run_i^*$ is the extension of the game-run $\run_i$ by one round in which $\PI$ plays $(\kappa,\nu_i)$, with $\nu_i$ the maximal ordinal below $\kappa$ which is played by  $\player$ in $\run_{i+1}^*$, and $\player$ answers following the strategy $\Sigma_i.$ 
	The thus defined $\strNamforczero{\cK}$-condition $p_0 \leq p$ has the required property.
\end{proof}

\begin{corollary}\label{cor 3.70}
	For every $\kappa \in  \cK,$
	\[ \Vdash_{\strNamforczero{\cK}}\  \cof(\kappa) = \omega.\]
\end{corollary}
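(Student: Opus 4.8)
The plan is to read this off directly from Proposition~\ref{itnambaforcing lem1}. Work in $V[G]$ for a $V$-generic $G \subseteq \strNamforczero{\cK}$ and fix $\kappa \in \cK$. Since $\strNamforczero{\cK}$ is strongly ssp, hence ssp, it preserves $\omega_1$, so $\omega_1$ is genuinely uncountable in $V[G]$; by Proposition~\ref{itnambaforcing lem1} the set of $\alpha < \omega_1$ failing the stated conclusion is countable, so we may fix some $\alpha < \omega_1$ for which
\[ c \;:=\; \mathsmaller\bigcup \{ \ran(\player(\run)) : (M,\run,\Sigma) \in p \in G,\ M\cap \omega_1 = \delta_\alpha^G\}\]
is cofinal in $\kappa$.

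Next I would check that $c$ is countable. For each $p \in G$ there is, by the second clause in the definition of a $\strNamforczero{\cK}$-condition, at most one triple $(M,\run,\Sigma) \in p$ with $M \cap \omega_1 = \delta_\alpha^G$, and $\ran(\player(\run))$ is finite. Moreover, by the definition of the ordering on $\strNamforczero{\cK}$, whenever $q \leq p$ are both in $G$ and each carries such a triple at level $\delta_\alpha^G$, the corresponding models and strategies agree and the run in $q$ extends the one in $p$; since $G$ is directed, the runs occurring at level $\delta_\alpha^G$ across $G$ therefore form a $\subseteq$-chain of finite sequences. Hence $c$ is an increasing union of finitely many ordinals along a chain of lengths contained in $\omega$, so $|c| \leq \omega$. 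As $c$ is cofinal in $\kappa$ and $\kappa > \omega$ is a limit ordinal, this gives $\cof(\kappa)^{V[G]} = \omega$, which is the claim.

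The only points requiring a word of care are: (i) that at least one good $\alpha$ exists, which uses that $\omega_1$ is preserved (part of ssp-ness of $\strNamforczero{\cK}$, already established) together with the unboundedness of $C^G$ noted in Notation~\ref{not3.2}; and (ii) that $c$ is genuinely countable rather than of size $2^{<\mu}$, which is exactly the uniqueness-plus-nestedness observation above. I do not anticipate any real obstacle here — all of the content is in Proposition~\ref{itnambaforcing lem1}, and this corollary is a short unwinding of its statement.
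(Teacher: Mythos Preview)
Your argument is correct and in line with the paper, which states the corollary without proof as an immediate consequence of Proposition~\ref{itnambaforcing lem1}. One point worth flagging: your appeal to strong ssp-ness to secure $\omega_1$-preservation is a forward reference in the paper's order (Corollary~\ref{cor 3.74} comes \emph{after} the present corollary), so ``already established'' is not accurate. There is no circularity, but the appeal is also unnecessary: the density argument in the proof of Proposition~\ref{itnambaforcing lem1} already shows that for \emph{any} $(M,\run,\Sigma)$ with $\kappa \in M$ occurring in some condition of $G$ (and such triples exist in $G$ by density, via Proposition~\ref{proj stat many models}), the associated $\omega$-sequence at level $M\cap\omega_1$ is cofinal in $\kappa$. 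So a single good $\alpha$ is available without ever invoking preservation of $\omega_1$, and your careful check that $c$ is countable then finishes the job exactly as you wrote.
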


\begin{proposition}\label{lem 3.73} Let $\theta \gg \mu$ be a regular cardinal. 
	Then $\strNamforczero{\cK}$ is strongly semiproper with respect to every countable $N\prec  H_\theta$ such that $N\cap H_\mu \in \mathcal{W}_0.$
\end{proposition}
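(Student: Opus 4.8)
The plan is to run a standard ``model as a side condition'' argument, the one wrinkle being that the side conditions of $\strNamforczero{\cK}$ live in $H_\mu$ while semigenericity is tested against a model of $H_\theta$. Fix a countable $N\prec H_\theta$ with $N^*:=N\cap H_\mu\in\mathcal{W}_0$ (we may assume $\strNamforczero{\cK}\in N$). Since $\omega_1\subseteq H_\mu$ we have $N^*\cap\omega_1=N\cap\omega_1$; fix once and for all a winning strategy $\Sigma^*$ for $\player$ in $\Nambagamezero{N^*}{\cK}$. Given $p\in\strNamforczero{\cK}\cap N$, I would take
\[ p^*:=p\cup\{(N^*,\emptyset,\Sigma^*)\}. \]
Checking that $p^*$ is a condition with $p^*\le p$ is immediate once one notes that, since $p\in N$ is finite, every $(M,\run,\Sigma)\in p$ lies in $N$, and being an element of $H_\mu$ it lies in $N\cap H_\mu=N^*=N^*\ltimes\emptyset$, while $M\cap\omega_1$ is a countable ordinal in $N$, so $M\cap\omega_1<N\cap\omega_1=N^*\cap\omega_1$.

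Now fix $q\le p^*$, let $(N^*,\run^*,\Sigma^*)\in q$ be the triple extending the new one, and put $\vec\xi:=\player(\run^*)$, so $N^*\ltimes\run^*=\Hull(N^*,\vec\xi)$. I would declare $\trace(q|N)$ to be the set of all $(M,\run,\Sigma)\in q$ with $M\cap\omega_1<N\cap\omega_1$; this is again a $\strNamforczero{\cK}$-condition, and by the third coherence clause for $q$ each of its triples lies in $N^*\ltimes\run^*$. One then has to show $\trace(q|N)\parallel_{\omega_1}N$ and that every $r\le\trace(q|N)$ with $r\in\Hull(N,\trace(q|N))\cap\strNamforczero{\cK}$ is compatible with $q$.

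The device for both points is the trivial but crucial observation that a function $g\in N$ with $|\dom g|<\mu$ and $\ran g\subseteq H_\mu$ already lies in $H_\mu$, hence in $N\cap H_\mu=N^*$; in particular, for $H\in N^*$ and $f\in N$ the composite $f\circ H$ has domain of size $<\mu$, so after cutting its range down to $H_\mu$ (resp.\ to $\omega_1$) it lies in $N^*$. Since $\trace(q|N)$ is a finite subset of $\Hull(N^*,\vec\xi)$, it equals $H(\vec\xi)$ for some $H\in N^*$; hence any ordinal $<\omega_1$ of the form $f(\trace(q|N))$ with $f\in N$ is of the form $\bar f(\vec\xi)$ with $\bar f\in N^*$, so it lies in $\Hull(N^*,\vec\xi)\cap\omega_1$. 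But $\run^*$ follows the winning strategy $\Sigma^*$, so $\player$ never altered the $\omega_1$-intersection along $\run^*$, giving $\Hull(N^*,\vec\xi)\cap\omega_1=N^*\cap\omega_1=N\cap\omega_1$; this is $\trace(q|N)\parallel_{\omega_1}N$. By the same reasoning, if $r$ is as above then every triple of $r$ (an element of $H_\mu$ computed from $\trace(q|N)$ by a function in $N$) lies in $\Hull(N^*,\vec\xi)=N^*\ltimes\run^*$, and in particular has $\omega_1$-intersection $<N\cap\omega_1$. I would then take
\[ s:=(q\setminus\trace(q|N))\cup r \]
as a common extension of $q$ and $r$: $s\le r$ is clear, $s\le q$ holds because the triples of $q$ outside $\trace(q|N)$ are retained unchanged while each triple of $\trace(q|N)$ is extended inside $r$, and $s$ is a condition because the triples of $r$ all sit strictly below level $N\cap\omega_1$ whereas the remaining triples of $s$ sit at level $\ge N\cap\omega_1$ (so no two distinct triples of $s$ share an $\omega_1$-intersection), while the third coherence clause holds since every triple of $r$ lies in $N^*\ltimes\run^*$, and $N^*\ltimes\run^*\subseteq M\ltimes\run$ for every $(M,\run,\Sigma)\in q$ with $M\cap\omega_1>N\cap\omega_1$ (because $M\ltimes\run$ is an elementary submodel of $H_\mu$ containing $(N^*,\run^*,\Sigma^*)$, hence $N^*$, $\vec\xi$ and the hull they generate). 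So $r\parallel q$, and $p^*$ is strongly $(N,\strNamforczero{\cK})$-semigeneric.

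Since $p\in\strNamforczero{\cK}\cap N$ was arbitrary, this gives strong semiproperness with respect to $N$. The step I expect to demand the most care is exactly the descent from $N$ to $N^*=N\cap H_\mu$: the notion $\parallel_{\omega_1}N$ and the hulls $\Hull(N,\cdot)$ are computed in $H_\theta$, whereas the only $\omega_1$-preservation at our disposal is the one built into the Namba game on the $H_\mu$-model $N^*$. The bridge is that the traces, the triples occurring in $r$, and all relevant ordinals below $\omega_1$ are objects of $H_\mu$, so the witnessing functions from $N$ can always be trimmed to lie in $N^*$; everything else is bookkeeping with the three coherence clauses of $\strNamforczero{\cK}$.
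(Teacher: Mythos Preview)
Your argument is correct and follows essentially the same route as the paper's proof: the same master condition $p^*=p\cup\{(N^*,\emptyset,\Sigma^*)\}$, the same trace consisting of the triples of $q$ at levels below $N\cap\omega_1$, and the same common extension $s=(q\setminus\trace(q|N))\cup r$. The one respect in which your write-up goes beyond the paper is the explicit ``descent from $N$ to $N^*$'' step: the paper simply asserts that $\trace(q|N)\in N^*\ltimes\run^*$ suffices to conclude $\Hull(N,\trace(q|N))\cap\omega_1=N\cap\omega_1$ and that every relevant $r$ lies in $N^*\ltimes\run^*$, whereas you justify this by observing that any function in $N$ with small domain and $H_\mu$-valued range already lies in $N\cap H_\mu=N^*$. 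This is the right justification, and it is worth making explicit since the hulls in the definition of strong semigenericity are computed with functions from $N\prec H_\theta$, not $N^*\prec H_\mu$.
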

\begin{proof}
	Let $N \prec  H_\theta$ such that $N\cap H_\mu \in \mathcal{W}_0$ and  $p \in N$ a $\strNamforczero{\cK}$-condition. Define $p^\ast = p \cup \{(N\cap H_\mu, \emptyset, \Sigma_N)\},$ where $\Sigma_N$ is an arbitrary winning strategy for $\player$ in the game $\Nambagamezero{N\cap H_\mu}{\mathcal{K}}.$ Given any $q \leq p,$ define $\trace(q | N)$ to be the $\strNamforczero{\cK}$-condition
	$\{(M,\run_M^q, \Sigma_M^q) \in q : M\cap \omega_1 \in N \}.$
	There now exists some element of $q$ that has the form $ \{(N\cap H_\mu, \run^q_N, \Sigma_N)\}.$
	Note that $\hull(N, \trace(q | N))$ contains the same countable ordinals as $N,$ because  $\Sigma_N$ is a winning strategy for $\player$ in the game $\Nambagamezero{N\cap H_\mu}{\mathcal{K}}$ and because $\trace(q | N) \in N \cap H_\mu \ltimes \run^q_N.$ 
Furthermore, for every $r \in \hull(N, \trace(q | N)) \cap \strNamforczero{\cK},$ it is the case that $r \in (N \cap H_\mu) \ltimes \run^q_N,$ so if also $r \leq \trace(q | N),$
	then 
	\[ r \cup \{(M,\run_M, \Sigma_M) \in q: N\cap \omega_1 \notin N\} \]
	is a mutual lowerbound for $r$ and $q$.
\end{proof}

By combining Proposition~\ref{proj stat many models} and Proposition~\ref{lem 3.73}, 
we find that $\strNamforczero{\cK}$ is strongly semiproper with respect to projective stationary many countable submodels of $H_\theta,$ hence:

\begin{corollary}\label{cor 3.74}
	The forcing $\strNamforczero{\cK}$ is strongly ssp.
\end{corollary}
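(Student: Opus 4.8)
The plan is to deduce this directly from the definition of a strongly ssp forcing together with Propositions~\ref{proj stat many models} and~\ref{lem 3.73}; the only extra ingredient is a routine transfer of projective stationarity between $[H_\mu]^\omega$ and $[H_\theta]^\omega$.

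First I would fix an arbitrary regular $\theta \gg \strNamforczero{\cK}$. Since $\strNamforczero{\cK}$ has size $2^{<\mu}\ge\mu$ (and lies in $H_\chi$ for $\chi$ not far above $2^{<\mu}$), the requirement $\theta \gg \strNamforczero{\cK}$ forces $\theta \gg \mu$, so Proposition~\ref{lem 3.73} applies at $\theta$: the forcing $\strNamforczero{\cK}$ is strongly semiproper with respect to every countable $N \prec H_\theta$ with $N \cap H_\mu \in \mathcal{W}_0$. By the definition of strongly ssp it therefore suffices to check that
\[
\{\, N \in [H_\theta]^\omega : N \prec H_\theta \text{ and } N \cap H_\mu \in \mathcal{W}_0 \,\}
\]
is projective stationary in $[H_\theta]^\omega$. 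Applying Proposition~\ref{proj stat many models} with the ambient model $H_\mu$ in place of $H_\theta$ and the ground-model set $\cK \in H_\mu$ — whose proof in fact produces elementary submodels containing $\cK$, or else intersecting with the club $\{M : \cK \in M\}$ — shows that $\mathcal{W}_0$ itself is projective stationary in $[H_\mu]^\omega$. So the only thing left is the claim that the preimage of a projective stationary subset of $[H_\mu]^\omega$ under $N \mapsto N \cap H_\mu$ is projective stationary in $[H_\theta]^\omega$.

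To prove that claim I would fix a stationary $S \subseteq \omega_1$ and a function $\mathfrak{F} \colon [H_\theta]^{<\omega} \to H_\theta$, taken as usual to encode a wellorder of $H_\theta$, Skolem functions for $(H_\theta,\in)$, and the parameter $H_\mu$, so that every $\mathfrak{F}$-closed subset of $H_\theta$ is an elementary submodel. Writing $\mathrm{cl}_{\mathfrak{F}}(s)$ for the countable $\mathfrak{F}$-closure of a finite $s$, set $\mathfrak{G}(s) = \mathrm{cl}_{\mathfrak{F}}(s) \cap H_\mu$ for $s \in [H_\mu]^{<\omega}$, a function $[H_\mu]^{<\omega} \to [H_\mu]^{\le\omega}$. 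By projective stationarity of $\mathcal{W}_0$ choose $M \in \mathcal{W}_0$ with $M \prec H_\mu$, $M$ closed under $\mathfrak{G}$, and $M \cap \omega_1 \in S$. Then $N := \bigcup\{\mathrm{cl}_{\mathfrak{F}}(s) : s \in [M]^{<\omega}\}$ is a countable $\mathfrak{F}$-closed elementary submodel of $H_\theta$ with $N \cap H_\mu = \bigcup_s \mathfrak{G}(s) = M \in \mathcal{W}_0$ and, since $\omega_1 \subseteq H_\mu$, with $N \cap \omega_1 = M \cap \omega_1 \in S$. As $\mathfrak{F}$ and $S$ were arbitrary, this establishes the claim and hence the corollary.

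I expect the main — though entirely routine — obstacle to be exactly this bookkeeping with the two structures $H_\mu$ and $H_\theta$; everything else is immediate from Propositions~\ref{proj stat many models} and~\ref{lem 3.73}. One could also bypass the transfer lemma by observing that for $N \prec H_\theta$ with $\mu \in N$ the games $\Nambagamezero{N}{\cK}$ and $\Nambagamezero{N \cap H_\mu}{\cK}$ have the same legal moves and the same winning condition for $\player$ — any function witnessing that an ordinal below $\omega_1$ or a member of $\cK$ lies in the relevant hull may be replaced by a restriction of size ${\le}\,\lambda < \mu$, hence lying in $N \cap H_\mu$ — so that $\player$ wins one iff $\player$ wins the other, and then applying Proposition~\ref{proj stat many models} directly at $H_\theta$.
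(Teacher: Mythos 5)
Your proof is correct and follows the same route as the paper, which simply combines Proposition~\ref{proj stat many models} (projective stationarity of $\mathcal{W}_0$) with Proposition~\ref{lem 3.73} (strong semiproperness with respect to every $N\prec H_\theta$ with $N\cap H_\mu\in\mathcal{W}_0$). The only difference is that you spell out the routine lifting of projective stationarity from $[H_\mu]^\omega$ to $[H_\theta]^\omega$ via $N\mapsto N\cap H_\mu$, which the paper leaves implicit; that argument is fine as written.
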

\section{The cofinality of the remaining cardinals}

We further modify the forcing $\strNamforczero{\cK}$ so that for every uncountable regular cardinal $\kappa \in \lambda \setminus \mathcal{K}$ the trivial condition will force that $\cof^{V[G]}(\kappa) = \omega_1.$
In order to do this we need to adapt the Namba game $\Nambagamezero{M}{\mathcal{K}}$.

\begin{definition}\label{defnambagame2}
	Given a regular cardinal $\theta \gg \lambda$ and $\lambda \in M \prec H_\theta$, 
	the closed game $\Nambagameone{M}{\mathcal{K}}$ proceeds as follows. 
	
	\begin{itemize}	
		\item	Set $M_0 = M$ and set $\text{Promise}_0 = \emptyset.$
		\item In round $n< \omega,$ 
		\begin{itemize}
			\item
			$\PI$ first plays a regular cardinal $\lambda_n\in M_n$ with \[\omega_1 < \lambda_n <\lambda\text{ and }\lambda_n \notin \mathcal{K}.\]
			\item $\player$ has to play a pair $(\lambda_n,\beta_n)$ with $\beta_n\in\lambda_n$ and we define $$\text{Promise}_{n+1} = \text{Promise}_{n} \cup \{(\lambda_n,\beta_n)\}.$$  
			\item	
			Then $\PI$ is again to play and plays a pair $(\kappa_n, \nu_n),$ with \[\kappa_n \in \mathcal{K} \cap M_n\text{ and }\nu_n \in \kappa_n.\]
			\item $\player$ has to answer by playing an ordinal $\xi_n \in \kappa_n \setminus \nu_n$ such that
			\[M_{n+1} \cap \omega_1 = M_n \cap \omega_1,\]
			and such that also for every $(\kappa,\beta) \in \text{Promise}_{n+1},$
			\[\sup(M_{n+1} \cap \kappa) \leq \beta,\]
			where again    $M_{n+1}$ is defined in the following way:
			\[ M_{n+1} = \hull(M_n, \xi_n).\]
		\end{itemize}
		\item $\player$ loses in round $n$ of the game if $\player$ fails to play such $\xi_n,$ else the game continues. $\player$ wins the game if and only if $\player$ does not lose in any round $n < \omega $ of the game.
	\end{itemize}
\end{definition}

Remark here that the way the models $M_n$ are defined in the game $ \Nambagameone{M}{\mathcal{K}}$ differs from the previously considered game  $\Nambagamezero{M}{\mathcal{K}}$, where all moves of $\player$ get automatically added to $M_n$ in order to define $M_{n+1}$. In the game $ \Nambagameone{M}{\mathcal{K}}$, $\player$ plays first a pair $(\lambda_n,\beta_n)$ which doesn't get added to $M_n$ and then the ordinal $\xi_n$ that gets added to $M_n.$ The pair $(\lambda_n,\beta_n)$ merely accounts for a promise of $\player$ not to add any ordinals in the interval $\lambda_n \setminus (\beta_n+1)$ in any of the models $M_k$ formed in future moves.  
This is also reflected in the way we use the notation $M \ltimes \run$ when $\run$ is a finite run of $ \Nambagameone{M}{\mathcal{K}}$.

\begin{notation}[Compare Notation~\ref{not1}] \label{notation semidir2} 
	If $\run$ is a finite run of length $n$ of the game $\Nambagameone{M}{\mathcal{K}}$, we write
	\[ M \ltimes \run := M_{n} = \Hull(M, (\xi_k: k <n) ),\]
	where $M_n$ and $\xi_k$ are as in Definition~\ref{defnambagame2}.
\end{notation}

\begin{lemma} \label{proj stat many models2} For every set of uncountable regular cardinals $\mathcal{K} \in H_\theta$ with $\omega_1 \notin \mathcal{K},$ the set of countable elementary submodels $M$ of $H_\theta$ for which $\player$ has a winning strategy in the corresponding game $\Nambagameone{M}{\mathcal{K}}$ is a projective stationary subset of~$[H_\theta]^\omega.$
\end{lemma}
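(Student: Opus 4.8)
The plan is to mirror the proof of Proposition~\ref{proj stat many models}, the only genuinely new ingredient being a device inside $\player$'s strategy for honouring the promises $(\lambda_n,\beta_n)$. As in that proof it suffices, given a countable $N_0\prec H_\theta$ with $\cK\in N_0$ and a stationary $S\subseteq\omega_1$ consisting of limit ordinals $\delta$ with $\hull^\ast(N_0,\delta)=\delta$, to exhibit some $\delta\in S$ for which $\player$ wins $\Nambagameone{\hull^\ast(N_0,\delta)}{\cK}$. Fix an $N_0$-fully labelled tree $(\fullabtree,\lflt)$ and, by one application of the Thinning lemma with $\kappa=\omega_1$ (legitimate since $\omega_1\in N_0$) and with the stationary set $S$, an acceptable subtree $\tree$ of $\fullabtree$ and some $\delta\in S$ such that $\omega_1$ is a fixing cardinality for $\tree$, every member of $\cK$ is a branching cardinality for $\tree$, and $\hull^\ast(N_0,b)\cap\omega_1=\delta$ for every branch $b$ through $\tree$; put $M=\hull^\ast(N_0,\delta)=\bigcup_{n}\hull(N_0,\delta_n)$ for a cofinal $(\delta_n:n<\omega)$ in $\delta$, so $M\cap\omega_1=\delta\in S$.

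Now I describe a winning strategy for $\player$ in $\Nambagameone{M}{\cK}$. During a run $\player$ maintains a node $\sigma$ of a current tree (into whose range the moves of $\player$ are built, exactly as in Proposition~\ref{proj stat many models}), a decreasing sequence of acceptable trees $\tree=\tree_0\supseteq\tree_1\supseteq\cdots$ on a branch of each of which $\sigma$ lies, and a record of the pairs $(\mu,\beta)$ produced so far. Suppose in round $n$ that $\PI$ declares $\lambda_n=\mu$. If $\mu$ occurred in an earlier round, $\player$ replays the same bound and keeps the current tree. Otherwise: since $\mu\in M_n$ and $M_n\subseteq\hull^\ast(N_0,b)$ for every branch $b$ of the current tree (by the same computation as in Proposition~\ref{proj stat many models}), there is a node $\tau\supseteq\sigma$ of the current tree with $\mu\in\hull(N_0,\tau)$. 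Apply the Thinning lemma with $\kappa=\mu$ to the cone $\tree_n\cap\tau{\uparrow}$, regarded as a tree with root $\tau$: it is an acceptable subtree of the $\prFQ_{N_0}$-fully labelled tree $\fullabtree\cap\tau{\uparrow}$, the cardinal $\mu$ is a branching cardinality for it, and $\mu\in\hull(N_0,\tau)\subseteq\mathcal{R}_\lambda\cap\hull(N_0,\rho)=\prFQ_{N_0}(\rho)$ for every $\rho\supseteq\tau$ — which is all that the hypothesis ``$\kappa\in N_0$'' is used for in the proof of that lemma, the proof relativizing unchanged to the root $\tau$. The result is a thinning in which $\mu$ has become a fixing cardinality, every previously fixed cardinal is still fixing, $\omega_1$ is still fixing with hull-value $\delta$, every member of $\cK$ is still a branching cardinality, and $\sup(\hull^\ast(N_0,b)\cap\mu)=\delta_\mu$ along every branch $b$, for some $\delta_\mu<\mu$. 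Let $\tree_{n+1}$ be obtained from $\tree_n$ by replacing the cone above $\tau$ by this thinning, so that every branch of $\tree_{n+1}$ runs through $\tau$, and let $\player$ answer $(\mu,\delta_\mu)$. When $\PI$ then plays $(\kappa_n,\nu_n)$ with $\kappa_n\in\cK\cap M_n$, $\player$ proceeds verbatim as in Proposition~\ref{proj stat many models} inside $\tree_{n+1}$ above $\tau$: it passes to a node $\tau'\supseteq\tau$ with $\kappa_n\in\hull(N_0,\tau')$, then to a node $\tilde{\sigma}\supseteq\tau'$ with $\lflt(\tilde{\sigma})=\kappa_n$, plays $\xi_n=\min\{\eta\in\kappa_n\setminus\nu_n:\tilde{\sigma}\smallfrown\eta\in\tree_{n+1}\}$, and updates the current node to $\tilde{\sigma}\smallfrown\xi_n$; this is possible since $\kappa_n$ is still branching in $\tree_{n+1}$ above $\tau$.

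To verify that $\player$ never loses, fix a round $n$, let $\sigma_{n+1}:=\tilde{\sigma}\smallfrown\xi_n$ be the node just produced, and fix any branch $b$ of $\tree_{n+1}$ through $\sigma_{n+1}$. Exactly as in Proposition~\ref{proj stat many models}, $M_{n+1}=\hull(M,(\xi_k:k\le n))\subseteq\hull^\ast(N_0,b)$; since also $M_{n+1}\supseteq M$ and $\hull^\ast(N_0,b)\cap\omega_1=\delta$, we get $M_{n+1}\cap\omega_1=\delta=M_n\cap\omega_1$. Moreover, for each pair $(\mu,\delta_\mu)$ in the current record, declared in some round $n_0\le n$: because $\tree_{n+1}\subseteq\tree_{n_0+1}$ the branch $b$ is also a branch of $\tree_{n_0+1}$ and hence runs through the node above which the round-$n_0$ thinning was carried out, so $\sup(\hull^\ast(N_0,b)\cap\mu)=\delta_\mu$ and therefore $\sup(M_{n+1}\cap\mu)\le\delta_\mu$. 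Thus $\xi_n$ satisfies all requirements of the game, and $\player$ wins.

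The step I expect to cause trouble is the invocation of the Thinning lemma inside the strategy for the cardinals $\lambda_n$, which typically lie in $M_n\setminus N_0$ whereas the lemma as stated asks for $\kappa\in N_0$; the fix is to run the thinning only on the cone above a node $\tau$ already reached in the game, where $\lambda_n$ has entered $\hull(N_0,\tau)$, and then to check that the proof of the Thinning lemma used $\kappa\in N_0$ only to secure arbitrarily high levels uniformly labelled $\kappa$ above every node — a property that persists in the cone since $\lambda_n\in\prFQ_{N_0}(\rho)$ for all $\rho\supseteq\tau$. (A more uniform alternative would be to build, before the game, one acceptable subtree in which every cardinal in $\mathcal{R}_\lambda\setminus\cK$ is fixing, by iterating the Thinning lemma over $\mathcal{R}_\lambda\setminus\cK$ in increasing order; there the obstacle migrates to the limit stages, where a naive intersection of the approximating trees may destroy the cofinal branching at nodes labelled by members of $\cK$.) The remaining points — that cones of acceptable subtrees are again acceptable with the branching and fixing data inherited, that the Thinning lemma relativizes to an arbitrary root, and the bookkeeping that the nodes $\tau,\tau',\tilde{\sigma}$ can indeed be located — are routine variants of arguments already present in Proposition~\ref{proj stat many models}.
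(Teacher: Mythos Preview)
Your argument is correct and follows essentially the same route as the paper: repeatedly invoke the Thinning lemma during a play of $\Nambagameone{M}{\cK}$ to turn each $\lambda_n$ into a fixing cardinality and read off the bound $\beta_n$, while the $\xi_n$-moves are produced exactly as in Proposition~\ref{proj stat many models}. The paper organises the bookkeeping slightly differently---it shifts the current cone to a tree rooted at $\emptyset$ and replaces $N_0$ by $\Hull(N_0,\sigma_{n-1})$ at each stage---but the content is the same; your explicit passage to a node $\tau$ with $\lambda_n\in\hull(N_0,\tau)$ before thinning is precisely what justifies invoking the Thinning lemma when $\lambda_n\notin N_0$, a point the paper leaves implicit.
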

\begin{proof}
	Let $S \subseteq \omega_1$ be stationary and $N_0 \prec H_\theta$. 
	It suffices to show that there exists $\delta\in S$ such that 
	$\player$ has a winning strategy in the game $ \Nambagameone{\Hull(N_0,\delta)}{\mathcal{K}}$.
	
	Let $(\fullabtree,\ell)$ be an $N_0$-fully labelled tree and use the Thinning lemma to find an acceptable subtree $\tree_0$ of $\fullabtree$ such that
	
	\begin{itemize}
		\item  $\omega_1$ is a fixing cardinality for $\tree_0,$
		\item  every other element of $\mathcal{R}_\lambda$ is a branching cardinality for $\tree_0,$
		\item there exists an ordinal $\delta \in S$ such that for every branch $b$ through $\tree_0$ it is the case that $\hull^\ast(N_0,b) \cap \omega_1 = \delta.$
	\end{itemize}
	
	Let $\displaystyle M=\hull^\ast(N_0,\delta)=\bigcup_{n<\omega}\Hull(N_0,\delta_n)$, where $(\delta_n : n<\omega)$ is strictly increasing and cofinal in~$\delta.$ Then $ M\cap\omega_1=\delta \in S$ and what remains is to describe a winning strategy for $\player$ in the game $ \Nambagameone{M}{\mathcal{K}}$. Suppose $\PI$ starts by playing the cardinal~ $\lambda_0\in M\cap \cR_\lambda.$ Use the Thinning lemma to find an acceptable subtree $\tree'_0$ of $\tree_0$ such that
	\begin{itemize}
		\item  for every cardinal different from $\lambda_0$, if it was a fixing cardinality for $\tree_0$ it still is for $\tree'_0$ and if it was a branching cardinality for $\tree_0$, it also still is for~$\tree'_0,$
		\item $\lambda_0$ 
		is turned from a branching cardinality for $\tree_0$ into a fixing cardinality for $\tree'_0,$
		\item there exists an ordinal $\beta_0 \in \lambda_0$ such that for every branch $b$ through $\tree'_0$ it is the case that $\sup(\hull^\ast(N_0,b) \cap \lambda_0) = \beta_0.$
	\end{itemize}
	
	$\player$ then plays $(\lambda_0,\beta_0)$.
	Next, $\PI$ plays $(\kappa_0,\nu_0)$. Then $\kappa_0\in M$, so there exists $\tau_0\in \tree_0'$ such that $\kappa_0\in \hull(N_0,\tau_0).$
	There exists $\tilde\sigma_0\in \tree'_0\cap\tau_0{\uparrow}$ such that $\lflt(\tilde\sigma_0)=\kappa_0$. $\player$~answers \[\xi_0=\min\{ \xi < \kappa_0\setminus \nu_0 : \tilde\sigma_0\smallfrown\xi \in \tree'_0\}.\]	
	Define \[\begin{aligned}\sigma_0 &= \tilde\sigma_0\smallfrown \xi_0,\\
		\tree_1&=\{ \tau\in\lambda^{<\omega}:\tilde\sigma_0\smallfrown \xi_0\smallfrown \tau \in \tree_0'  \}.\end{aligned}\]	
	We now explain how $\player$ survives round $n>0.$    Suppose that $\PI$ first plays  the cardinal $\lambda_n\in M_n.$ Use the Thinning lemma to find an acceptable subtree $\tree'_n$ of $\tree_n$ such that
	\begin{itemize}
		\item  for every cardinal different from $\lambda_n$, if it was a fixing cardinality for $\tree_n$ it still is for $\tree'_n$ and if it was a branching cardinality for $\tree_n$, it also still is for~$\tree'_n,$
		\item  if $\lambda_n$ was a branching cardinality for $\tree_n$, it is turned into a fixing cardinality for $\tree'_n$,
		\item there exists an ordinal $\beta_n \in \lambda_n$ such that for every branch $b$ through $\tree'_n$ it is the case that $\sup(\hull^\ast(\Hull( N_0, \sigma_{n-1}),b) \cap \lambda_n) = \beta_n.$
	\end{itemize}   
	
	$\player$ then plays $(\lambda_n,\beta_n).$ Suppose that $\PI$ plays next a pair $(\kappa_{n},\nu_n)$. Then \[\kappa_{n}\in\hull(N_0,\sigma_{n-1}\smallfrown\tau_n),\] 
	for some $\tau_n\in\tree_n'$,
	so there exists $\tilde\sigma_n\in \tree_n'\cap \tau_n\!\uparrow$ such that  $\lflt(\sigma_{n-1}\smallfrown\tilde\sigma_n)=\kappa_{n}$. $\PI$ answers
	\[\xi_n=\min\{ \eta < \kappa_n\setminus \nu_n : \tilde\sigma_n\smallfrown\eta \in \tree'_n  \}.\] 	
	Define $\sigma_{n} = \sigma_{n-1}\smallfrown\tilde\sigma_n\smallfrown\xi_n,$ $\tree_{n+1} = \{\tau\in\lambda^{<\omega}:  \sigma_n\smallfrown \tau  \in \tree_n  \}.$\\
	
	If $z$ is a resulting play of the game in which $\player$ has played $(\xi_n:n<\omega)$ and $((\lambda_n,\beta_n):n<\omega)$,
	then unioning  the terms in the sequence $( \sigma_n:n<\omega)$ determines a branch $b$ of $\tree$ such that \[ \bigcup_{n<\omega}\hull(M,\xi_n) \subseteq \hull^\ast(N_0,b).\]
	From which it follows that
	\[\bigcup_{n<\omega} \hull(M,\xi_n)\cap\omega_1=\delta.\]
	Similarly, we see that for every $n<\omega,$
	\[\bigcup_{n<\omega} \hull(M,\xi_n)\cap \lambda_n \leq \beta_n.\]
\end{proof}
Now back to the problem of adding $\omega$-cofinal sequences through the cardinals in $\mathcal{K}$ and simultaneously forcing the uncountable regular cardinals in $\lambda \setminus \mathcal{K}$ to get cofinality $\omega_1.$ Fix again $\mu \gg \lambda$ and denote $\mathcal{W}_1$ the set of all countable models $M \prec H_\mu$ with $\cK \in M$ for which $\player$ has a winning strategy in the game $ \Nambagameone{M}{\mathcal{K}}$.

\begin{definition}
	
A $\strNamforcone{\cK}$-condition is a finite set $p$ that satisfies the following conditions:

\begin{itemize}
	\item every element of $p$ is a 4-tuple $(M, \run,\Sigma,d)$ with $M \in \mathcal{W}_1,$ with $\Sigma$ a winning strategy for $\player$ in the game $\Nambagameone{M}{\mathcal{K}}$, with $\run$ a finite run of the game $ \Nambagameone{M}{\mathcal{K}}$ in which $\player$ follows $\Sigma$ and with $d\in H_\mu^{<\omega},$
	\item every two elements $(M_1, \run_1,\Sigma_1,d_1)$ and $(M_2, \run_2,\Sigma_2,d_2)$ of $p$ that satisfy $M_1 \cap \omega_1 = M_2 \cap \omega_1$ are equal,
	\item for every two elements $(M_1, \run_1,\Sigma_1,d_1)$ and $(M_2, \run_2,\Sigma_2,d_2)$ of $p,$ if\linebreak $M_1 \cap \omega_1 < M_2 \cap \omega_1$ then $(M_1, \run_1,\Sigma,d_1) \in M_2 \ltimes \run_2.$
\end{itemize}
If $p,q$ are two $\strNamforcone{\cK}$-conditions, then $q \leq p$ if and only if:
\begin{itemize}
	\item[]
	for every $(M, \run,\Sigma,d) \in p$ there exists $(M',\run',\Sigma',d') \in q$ such that\linebreak $M = M', \Sigma = \Sigma',$ $d \subseteq d',$ and the finite run  $\run'$ of the game $\Nambagameone{M}{\mathcal{K}}$ is an extension of the run $\run.$
\end{itemize}

\end{definition}
 For generic filters $G \subseteq \strNamforcone{\cK}$ we use the same notations $C^G$, $(\delta_\alpha^G : \alpha < \omega_1)$ and $(M^G_\alpha: \alpha < \omega_1)$ as introduced in Notation~\ref{not3.2}. In particular, for every $\alpha < \omega_1,$ we denote by $M_\alpha^G$ the union of all models of the form $M \ltimes \run$ with $(M,\run, \Sigma,d) \in p \in G$ and $M\cap \omega_1=\delta_\alpha^G$.
Note that for every $\alpha < \omega_1$ the set $M_\alpha^G$ is a countable increasing union of elementary submodels of $H_\mu^V$ and therefore itself a countable elementary submodel of $H_\mu^V$.

Including the fourth coordinate $d\in H_\mu^{<\omega}$ in the elements $(M, \run,\Sigma,d)$ enables one to run a standard density argument to show that the sequence $(M^G_\alpha: \alpha < \omega_1)$ is continuous (and this is our sole intent for including these fourth coordinates).

\begin{lemma}\label{lemseqcont2}
$\strNamforcone{\cK}$ forces that the sequence $(M^G_\alpha: \alpha < \omega_1)$ is continuous.
\end{lemma}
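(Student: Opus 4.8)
The goal is to show that $(M^G_\alpha : \alpha < \omega_1)$ is continuous, i.e. for every limit $\alpha < \omega_1$, we have $M^G_\alpha = \bigcup_{\beta < \alpha} M^G_\beta$. The inclusion $\supseteq$ is essentially automatic from the coherence requirement built into the conditions (if $M_1 \cap \omega_1 < M_2 \cap \omega_1$ then $(M_1, \run_1, \Sigma_1, d_1) \in M_2 \ltimes \run_2$, and membership in the hull $M \ltimes \run$ is preserved as $\run$ is extended and as we pass to unions), so the content is the inclusion $\subseteq$: every element of $M^G_\alpha$ should already lie in some $M^G_\beta$ with $\beta < \alpha$.

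First I would set up the density argument. Fix in $V$ a name $\dot x$ and a condition $p$ forcing that $\dot x \in M^G_\alpha$ for a name $\dot\alpha$ for a limit ordinal; by strengthening $p$ we may assume $p$ decides $\dot\alpha = \alpha$ and decides that $\dot x$ is named by some $f(\xi_0, \dots, \xi_{k-1})$ where $f \in M$, the triple $(M, \run, \Sigma, d) \in p$ has $M \cap \omega_1 = \delta^G_\alpha$, and $\player(\run) = (\xi_0, \dots, \xi_{k-1})$ — in other words $\dot x$ is actually realized as an element of the finite model $M \ltimes \run$ already determined by $p$. Now I would use the fourth coordinate: extend $p$ to $q$ by replacing the $M$-triple $(M, \run, \Sigma, d)$ with $(M, \run, \Sigma, d {}^\frown \langle f(\xi_0,\dots,\xi_{k-1}) \rangle)$ — legal since $f(\xi_0, \dots, \xi_{k-1}) \in H_\mu^{<\omega}$-worthy, being an element of $M \prec H_\mu$ — and crucially also add a new triple $(M', \emptyset, \Sigma', d')$ with $M' \prec H_\mu$, $M' \in \mathcal{W}_1$, chosen so that $M' \cap \omega_1 < M \cap \omega_1$, $p \cap M' = p \cap M'$ (i.e. $M'$ reflects the relevant finite fragment of $q$), and so that the already-appended datum $d$ together with the value $f(\xi_0,\dots,\xi_{k-1})$ lies in $M'$. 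Such $M'$ exists by elementarity and the projective stationarity of $\mathcal{W}_1$ (Lemma~\ref{proj stat many models2}): one finds countably many candidate $M'$ below any ordinal in a suitable stationary subset of $\omega_1$. Then $q \Vdash f(\xi_0, \dots, \xi_{k-1}) \in M' \ltimes \emptyset = M' \subseteq M^G_{\beta}$ for the $\beta$ with $\delta^G_\beta = M' \cap \omega_1 < \alpha$, and since $\alpha$ is a limit such a $\beta$ appears cofinally often below $\alpha$, so $q$ forces $\dot x \in \bigcup_{\beta < \alpha} M^G_\beta$.

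The key steps in order: (1) reduce, by density and the definition of $M^G_\alpha$, to the case that the forced element of $M^G_\alpha$ is $f(\xi_0, \dots, \xi_{k-1})$ for some $f, \xi_i$ read off from a single triple in the condition; (2) append this value to the $d$-coordinate of that triple; (3) use projective stationarity of $\mathcal{W}_1$ plus elementarity to add a new side-condition model $M'$ of strictly smaller $\omega_1$-height that contains (a name coding) this value — this is where the fourth coordinate pays off, since having the value sitting in a $d$-coordinate lets $M'$ capture it without $M'$ needing to "know" the strategy $\Sigma$; (4) verify that the resulting finite set is a legitimate $\strNamforcone{\cK}$-condition below $p$, which amounts to checking the three bullet points in the definition, the delicate one being that the new model $M'$ and the old model $M$ satisfy the coherence clause $(M', \emptyset, \Sigma', d') \in M \ltimes \run$, guaranteed by the reflection property we demanded of $M'$; (5) conclude.

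The main obstacle is step (3)–(4): one must choose $M'$ so that simultaneously (a) $M' \cap \omega_1 < M \cap \omega_1$, (b) $M'$ reflects the finite portion of $q$ consisting of triples with $\omega_1$-height $< M \cap \omega_1$ so that the coherence clauses among old triples and $M'$ hold, (c) $M' \ni$ the appended value, and (d) $M' \in \mathcal{W}_1$, i.e. $\player$ has a winning strategy in $\Nambagameone{M'}{\mathcal{K}}$. All four are arranged at once by working inside $M \ltimes \run$ (which, being a hull of $M \prec H_\mu$, is elementary enough), picking $\delta \in S$ in a stationary set witnessing projective stationarity of $\mathcal{W}_1$ below $M \cap \omega_1$ with the finite data and the appended value in $\hull^\ast(N_0, \delta)$ for a suitable fixed $N_0$, and setting $M' = \hull^\ast(N_0,\delta)$; one then notes $M'$ is an element of $M \ltimes \run$ because all the parameters defining $M'$ are. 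This is a routine but careful side-condition bookkeeping argument of exactly the type the authors advertise generalizing.
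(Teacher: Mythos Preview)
The paper gives no proof of this lemma beyond the remark that the fourth coordinate $d$ ``enables one to run a standard density argument,'' so there is nothing detailed to compare against.  Your approach---insert a new side-condition model $M'$ of smaller $\omega_1$-height containing the given element $x$---is a natural reading of that remark, but as written it has two real gaps.

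First, the reduction in step~(1)--(2) is not available.  You write that after strengthening, $p$ ``decides $\dot\alpha=\alpha$'' and carries a tuple $(M,\run,\Sigma,d)$ with $M\cap\omega_1=\delta^G_\alpha$.  But for a limit $\alpha$, no finite condition can force that a given height is the $\alpha$-th element of $C^{\dot G}$: any extension of $p$ may insert further models below $M$, increasing the index of $M\cap\omega_1$ in $C^{\dot G}$.  Indeed, your own step~(4) does exactly that.  So you cannot simultaneously pin down $\alpha$ and then freely add a model below.  The argument must be reorganised so that it speaks only about heights in $C^G$, not about their indices.

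Second, and more seriously, the insertion step~(3)--(4) is not justified.  For $q=p\cup\{(M',\emptyset,\Sigma',d')\}$ to be a condition you need $(M',\emptyset,\Sigma',d')\in M\ltimes\run$, hence $M',\Sigma'\in M\ltimes\run$.  You appeal to projective stationarity of $\mathcal W_1$ and elementarity of $M\ltimes\run\prec H_\mu$; but $\mathcal W_1$ is a subset of $[H_\mu]^\omega$, is not an element of $H_\mu$, and (because ``$M'\prec H_\mu$'' is not first-order in $H_\mu$) is not obviously a definable class there either.  Your proposed workaround---choose $N_0$ and then $\delta$ in a stationary set $S$ witnessing projective stationarity, and set $M'=\hull^\ast(N_0,\delta)$---does not close the gap: the relevant stationary set $S$ lives in $H_\theta$ for $\theta\gg\mu$, not in $M\ltimes\run$, so you cannot choose $\delta\in S$ \emph{inside} $M\ltimes\run$ while also guaranteeing the $V$-fact $M'\in\mathcal W_1$.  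In short, nothing you have said ensures that $M\ltimes\run$ contains an element of $\mathcal W_1$ with the required finite data inside it.  (Note too that if your density \emph{did} hold unrestrictedly---``below every model we may insert a lower one containing $x$''---it would by genericity show $C^G$ has no minimum, which is absurd; so the density you need must be more delicate than stated.)  The role of $d$ is precisely to circumvent this difficulty by a different bookkeeping, not by the superficial use you make of it in step~(2); you should rethink how $d$ actually enters.
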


We next establish that in case $\lambda = \sup(\cK)$ is regular, $\strNamforcone{\cK}$ forces that $\cof^{V[G]}(\lambda) = \omega_1.$

Suppose therefore that the cardinal $\lambda$ is regular. Because $(M^G_\alpha: \alpha < \omega_1)$ is continuous increasing and the models $M_\alpha^G$ together cover $\lambda$, if we can show that $M^G_\alpha\cap\lambda$ is bounded in $\lambda$ for every $\alpha,$ it will readily follow that $\cof^{V[G]}(\lambda) = \omega_1.$ That this is the case follows from the following simple observation.

\begin{lemma}\label{lemmalambda} 
	Suppose $M\prec H_\mu$ and $\kappa <\lambda$ both belong to $M$ and $\lambda$ is a regular cardinal. Then
	\[ \sup (\hull(M,\alpha)\cap\lambda)=\sup(M\cap\lambda),  \]
	for all $\alpha < \kappa$.
\end{lemma}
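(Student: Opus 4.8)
The plan is to separate the two inequalities. The inclusion $M\subseteq\hull(M,\alpha)$ is immediate (for any $m\in M$ the constant function $\lambda\to\{m\}$ lies in $M$ and witnesses $m\in\hull(M,\alpha)$, since $\alpha<\lambda$), so $\sup(M\cap\lambda)\leq\sup(\hull(M,\alpha)\cap\lambda)$ holds trivially; the content of the lemma is the reverse inequality $\sup(\hull(M,\alpha)\cap\lambda)\leq\sup(M\cap\lambda)$.

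To prove it, I would take an arbitrary element $x$ of $\hull(M,\alpha)\cap\lambda$, write it as $x=f(\alpha)$ with $f\in M$ and $\alpha\in\dom(f)$, and replace $f$ by the set
\[A_f=\{\,f(\xi):\xi\in\kappa\cap\dom(f)\ \text{and}\ f(\xi)\in\lambda\,\}.\]
This set is definable over $H_\mu$ from the parameters $f,\kappa,\lambda$, all of which belong to $M$, so $A_f\in M$ by elementarity. Since $A_f$ is the image of a subset of $\kappa$, we have $|A_f|\leq|\kappa|<\lambda$, and regularity of $\lambda$ gives $\sup(A_f)<\lambda$. As $H_\mu$ (with $\mu\gg\lambda$) contains $A_f$ and correctly computes its supremum together with the fact that this supremum is $<\lambda$, elementarity yields $\sup(A_f)\in M$ and $\sup(A_f)<\lambda$, i.e.\ $\sup(A_f)\in M\cap\lambda$. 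Now, because $\alpha<\kappa$ and $f(\alpha)\in\lambda$, the element $x=f(\alpha)$ belongs to $A_f$, whence $x\leq\sup(A_f)\leq\sup(M\cap\lambda)$. Taking the supremum over all such $x$ gives the desired inequality.

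As the text indicates, this is a ``simple observation'' and there is no genuine obstacle; the only point meriting attention is the elementarity step. Here it is essential that the bounding set $A_f$ is obtained by restricting $f$ to $\kappa$ --- a parameter available \emph{inside} $M$ --- rather than to $\alpha$ or $\alpha+1$, which need not lie in $M$; this is precisely why the hypothesis $\kappa\in M$ (together with $\alpha<\kappa<\lambda$) is built into the statement. Once $A_f\in M$ is secured, regularity of $\lambda$ does the rest.
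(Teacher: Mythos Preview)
Your proof is correct and follows essentially the same approach as the paper: both arguments bound an element $f(\alpha)\in\hull(M,\alpha)\cap\lambda$ by the supremum of the (at most $\kappa$-sized) range of $f\restriction\kappa$ inside $\lambda$, which lies in $M\cap\lambda$ by elementarity and regularity of $\lambda$. The paper compresses this to a single line (``if $f:\kappa\to\lambda$ is in $M$ then $f(\alpha)\leq\sup(f)\in M\cap\lambda$''), tacitly reducing to functions with domain $\kappa$ and codomain $\lambda$, whereas you spell out the construction of the bounding set $A_f$ explicitly.
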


\begin{proof}
	For any $\alpha <\kappa$ and any function $f: \kappa \to \lambda$ we have that if $f\in M,$ then $f(\alpha) \leq \sup(f) \in M\cap\lambda.$
\end{proof}

\begin{corollary}\label{cor27} Suppose $\lambda=\sup(\cK)$ is regular.
	Suppose $G$ is $V$-generic for $\strNamforcone{\cK}$.
	Let $p\in G$ and $(M,\run_M^p, \Sigma_M^p,d_M^p) \in p,$
	and let $\alpha < \omega_1$ such that\linebreak $M_\alpha^G\cap \omega_1 = M\cap \omega_1.$
	Then \[\sup(M\cap \lambda) = \sup(M^G_\alpha \cap \lambda).\]
\end{corollary}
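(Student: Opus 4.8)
The plan is to unfold the definition of $M_\alpha^G$ and then reduce the statement to an iteration of Lemma~\ref{lemmalambda}. Here we use that $\lambda\notin\cK$, so that every $\kappa\in\cK$ satisfies $\kappa<\lambda$; this is the relevant case, since the purpose of this corollary is to help establish $\cof^{V[G]}(\lambda)=\omega_1$ — and indeed if $\lambda\in\cK$ then $\strNamforcone{\cK}$ would give $\lambda$ countable cofinality and the displayed equality would fail.

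First I would pin down the shape of $M_\alpha^G$. Since $\player$ maintains $M_{n+1}\cap\omega_1=M_n\cap\omega_1$ throughout $\Nambagameone{\cdot}{\cK}$, every model $M'\ltimes\run'$ contributing to $M_\alpha^G$ satisfies $(M'\ltimes\run')\cap\omega_1=M'\cap\omega_1=\delta_\alpha^G$, whence $M_\alpha^G\cap\omega_1=\delta_\alpha^G$ and, by hypothesis, $M\cap\omega_1=\delta_\alpha^G$ too. Next I would show that any tuple $(M',\run',\Sigma',d')$ appearing in a condition of $G$ with $M'\cap\omega_1=\delta_\alpha^G$ has $M'=M$: given two such tuples, lying in conditions $p_1,p_2\in G$, pick $q\in G$ below both; by the definition of the ordering $q$ contains refinements of both tuples with the same first coordinates, and since two elements of a single $\strNamforcone{\cK}$-condition with equal intersection with $\omega_1$ must coincide, these refinements are equal; comparing also with $(M,\run_M^p,\Sigma_M^p,d_M^p)\in p\in G$ shows that all the first coordinates in play equal $M$. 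Hence $M_\alpha^G=\bigcup\{\,M\ltimes\run':(M,\run',\Sigma',d')\in p'\in G\,\}$, an increasing union, and in particular $M\subseteq M\ltimes\run_M^p\subseteq M_\alpha^G$, which already gives $\sup(M\cap\lambda)\le\sup(M_\alpha^G\cap\lambda)$.

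For the reverse inequality I would argue that no finite run enlarges the supremum below $\lambda$. Fix a tuple $(M,\run',\Sigma',d')\in p'\in G$ and let $M=M_0,M_1,\dots,M_n=M\ltimes\run'$ be the chain of models built along $\run'$, so $M_{k+1}=\Hull(M_k,\xi_k)$ with $\xi_k$ the ordinal $\player$ plays in round $k$ (Definition~\ref{defnambagame2} and Notation~\ref{notation semidir2}). By the rules of the game, $\xi_k\in\kappa_k$ for some $\kappa_k\in\cK\cap M_k$, so $\kappa_k<\lambda$ and $\kappa_k\in M_k$; and each $M_k$ is an elementary submodel of $H_\mu$, inductively, as a Skolem hull of such a submodel along an element of its union. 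Lemma~\ref{lemmalambda}, applied with the model $M_k$, the cardinal $\kappa_k$ and the ordinal $\xi_k<\kappa_k$, then gives $\sup(M_{k+1}\cap\lambda)=\sup(M_k\cap\lambda)$; iterating over $k\le n$ yields $\sup\big((M\ltimes\run')\cap\lambda\big)=\sup(M\cap\lambda)$. Taking the union over all runs $\run'$ that occur in $G$ at level $\alpha$, and using that the supremum of a union of sets of ordinals equals the supremum of the individual suprema, I obtain $\sup(M_\alpha^G\cap\lambda)=\sup(M\cap\lambda)$, as desired.

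This being a genuine corollary of Lemma~\ref{lemmalambda}, I anticipate no real difficulty. The two points that need a little care are the bookkeeping of the second paragraph — that a single base model $M$ underlies the whole tower defining $M_\alpha^G$, which follows from the definition of a $\strNamforcone{\cK}$-condition together with $G$ being a filter — and the explicit appeal to $\lambda\notin\cK$, without which the cardinals $\kappa_k$ could equal $\lambda$ and Lemma~\ref{lemmalambda} would not apply.
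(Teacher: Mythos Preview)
Your argument is correct and is precisely what the paper intends: the corollary is stated without proof as an immediate consequence of Lemma~\ref{lemmalambda}, and you have simply spelled out the iteration of that lemma along the chain $M=M_0,\dots,M_n=M\ltimes\run'$, together with the filter bookkeeping showing a single base model $M$ underlies $M_\alpha^G$. Your explicit remark that one needs $\lambda\notin\cK$ so that each $\kappa_k<\lambda$ (and hence Lemma~\ref{lemmalambda} applies) is apt.
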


\begin{lemma}\label{lem4.8} Suppose $G$ is $V$-generic for $\strNamforcone{\cK}$. 
	If $\lambda=\sup(\cK)$ is regular, then
	in $V[G]$ the ordinal $\lambda$ has cofinality $\omega_1.$
\end{lemma}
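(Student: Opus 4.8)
The plan is to work in $V[G]$ and read off a cofinal map $\omega_1 \to \lambda$ from the sequence $(M^G_\alpha : \alpha < \omega_1)$, and then check that $\lambda$ cannot have cofinality $<\omega_1$ either. Recall that this sequence is increasing, is continuous by Lemma~\ref{lemseqcont2}, and that (as noted just before Lemma~\ref{lemmalambda}) the models $M^G_\alpha$ together cover $\lambda$; since $\strNamforcone{\cK}$ preserves $\omega_1$ (the analogue for $\strNamforcone{\cK}$ of Corollary~\ref{cor 3.74}), the sequence is genuinely indexed by $\omega_1^{V[G]}$, and $\bigcup_{\alpha<\omega_1} M^G_\alpha \supseteq \lambda$.

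For each $\alpha < \omega_1$ I would set $\beta_\alpha = \sup(M^G_\alpha \cap \lambda)$. Since $\delta^G_\alpha \in C^G$, there is some $(M,\run,\Sigma,d) \in p \in G$ with $M \cap \omega_1 = \delta^G_\alpha = M^G_\alpha \cap \omega_1$, so Corollary~\ref{cor27} applies and yields $\beta_\alpha = \sup(M \cap \lambda)$. Now $M$ is a countable (in $V$) elementary submodel of $H^V_\mu$ and $\lambda$ is a regular cardinal above $\omega$ in $V$, so $\sup(M\cap\lambda) < \lambda$; being $<\lambda$ is absolute, hence $\beta_\alpha < \lambda$ holds in $V[G]$ as well. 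The sequence $(\beta_\alpha : \alpha < \omega_1)$ is non-decreasing since the $M^G_\alpha$ are increasing, and $\sup_{\alpha<\omega_1}\beta_\alpha = \sup\bigl(\bigcup_{\alpha<\omega_1} M^G_\alpha \cap \lambda\bigr) = \lambda$.

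The conclusion then follows by an elementary computation. The range of $(\beta_\alpha : \alpha < \omega_1)$ is a cofinal subset of $\lambda$ of cardinality at most $\omega_1$, so $\cof^{V[G]}(\lambda) \leq \omega_1$. Conversely, write $\theta = \cof^{V[G]}(\lambda)$ and fix an increasing cofinal sequence $(\gamma_i : i < \theta)$ in $\lambda$; for each $i<\theta$ let $\alpha_i < \omega_1$ be least with $\beta_{\alpha_i} > \gamma_i$, which exists because $\sup_\alpha \beta_\alpha = \lambda$. If $\sup_{i<\theta}\alpha_i$ were some $\alpha^* < \omega_1$, then by monotonicity $\beta_{\alpha^*} \geq \sup_{i<\theta}\beta_{\alpha_i} \geq \sup_{i<\theta}\gamma_i = \lambda$, contradicting $\beta_{\alpha^*} < \lambda$; hence $\{\alpha_i : i<\theta\}$ is cofinal in $\omega_1$ and $\theta \geq \cof(\omega_1) = \omega_1$. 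Therefore $\cof^{V[G]}(\lambda) = \omega_1$.

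As for obstacles: all the substance is already packaged in Lemma~\ref{lemseqcont2}, the covering property of the $M^G_\alpha$, and Corollary~\ref{cor27} (whose proof in turn rests on the promise mechanism built into the game $\Nambagameone{\cdot}{\cK}$), so what remains is the routine ordinal arithmetic above. The only point I would be careful about is ensuring that $(M^G_\alpha : \alpha<\omega_1)$ is indexed by the true $\omega_1$ of $V[G]$, i.e.\ that $\omega_1$ is not collapsed — which is where stationary set preservation of $\strNamforcone{\cK}$ enters.
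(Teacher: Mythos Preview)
Your proof is correct and follows the same approach as the paper: define $\beta_\alpha = \sup(M^G_\alpha \cap \lambda)$, use Corollary~\ref{cor27} to see each $\beta_\alpha < \lambda$, and conclude that $(\beta_\alpha : \alpha < \omega_1)$ witnesses $\cof^{V[G]}(\lambda) = \omega_1$. The paper compresses all of this into one line (``$(\sup(M^G_\alpha\cap\lambda):\alpha<\omega_1)$ is a closed unbounded subset of $\lambda$''), whereas you spell out both inequalities, in particular the lower bound $\cof^{V[G]}(\lambda)\geq\omega_1$ via the pull-back argument; this is a welcome expansion rather than a different route. One small inaccuracy in your closing remarks: Corollary~\ref{cor27} does not rely on the promise mechanism of $\Nambagameone{\cdot}{\cK}$ --- it is a direct consequence of Lemma~\ref{lemmalambda}, which only uses regularity of $\lambda$; the promises are needed for the cardinals in $\cR_\lambda\setminus\cK$, not for $\lambda$ itself.
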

\begin{proof}
	It follows from Lemma~\ref{lemseqcont2} and Corollary~\ref{cor27} that \[(\sup(M^G_\alpha \cap \lambda) : \alpha < \omega_1)\] is a closed unbounded subset of $\lambda.$
\end{proof}
  
We can now summarize the main properties of the forcing $\strNamforcone{\cK}$.

\begin{theorem}
	The forcing $\strNamforcone{\cK}$ has the following properties:
	\begin{enumerate}
		\item if $\theta \gg \mu$ is a regular cardinal, then $\strNamforcone{\cK}$ 
		 is strongly semiproper with respect to every countable $N
		\prec  H_\theta$ such that $N\cap H_\mu \in \mathcal{W}_1,$ \label{itm:3}
		\item  $\strNamforcone{\cK}$ is strongly ssp,\label{itm:4}
		\item  for every $\kappa \in \mathcal{K},$ forcing with $\strNamforcone{\cK}$ changes the cofinality of $\kappa$ to~$\omega,$\label{itm:5}
		\item for every regular $\kappa \in \lambda \setminus \mathcal{K} \cup \{ \lambda \}$, forcing with $\strNamforcone{\cK}$ changes the cofinality of $\kappa$ to~$\omega_1$,
		\label{itm:6}
		\item forcing with $\strNamforcone{\cK}$ makes $(2^{<\mu})^+$ into the new $\omega_2$ and changes the cofinality of every regular cardinal between $\lambda$ and $2^{<\mu}$ to $\omega_1.$	\label{itm:7}
	\end{enumerate}
\end{theorem}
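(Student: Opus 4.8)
The plan is to assemble the five clauses from results already established. Clauses~(\ref{itm:3}) and~(\ref{itm:4}) are handled exactly as for $\strNamforczero{\cK}$: clause~(\ref{itm:3}) is proved by the verbatim argument of Proposition~\ref{lem 3.73}, only now carrying the fourth coordinate $d$ along (given $p \in N$, set $p^* = p \cup \{(N\cap H_\mu,\emptyset,\Sigma_N,\emptyset)\}$, and for $q \le p$ put $\trace(q\mid N) = \{(M,\run_M^q,\Sigma_M^q,d_M^q) \in q : M\cap\omega_1 \in N\}$; the run $\run^q_N$ of $\player$ following a winning strategy for $\Nambagameone{N\cap H_\mu}{\cK}$ guarantees $\trace(q\mid N)\parallel_{\omega_1}N$ and that any $r \le \trace(q\mid N)$ in $\hull(N,\trace(q\mid N))$ is compatible with $q$). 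For clause~(\ref{itm:4}), combine clause~(\ref{itm:3}) with Lemma~\ref{proj stat many models2} exactly as Corollary~\ref{cor 3.74} was obtained from Proposition~\ref{lem 3.73} and Proposition~\ref{proj stat many models}. Note strong ssp in particular gives ssp, so all the cofinality claims below are about genuinely preserved $\omega_1$.

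For clause~(\ref{itm:5}), repeat the density argument of Proposition~\ref{itnambaforcing lem1}: given $\kappa\in\cK$ and $\nu<\kappa$, below any condition $p$ containing a tuple with $\kappa$ in its model one extends, tuple by tuple from the top $\cap\,\omega_1$ downward, each run $\run_{i}$ by one round in which $\PI$ plays $(\lambda_i,\beta_i)$ for a harmless $\lambda_i\notin\cK$ and then $(\kappa,\nu_i)$ — here one must feed in a suitable $\lambda_i$ so the coherence requirement $(M_i,\ldots)\in M_{i+1}\ltimes\run_{i+1}^*$ is maintained, but since $\player$ follows a fixed winning strategy, these promises are automatically consistent — obtaining a stronger condition in which $\player$'s moves reach above $\nu$ inside $\kappa$. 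Hence $\ran(\player(\run))\cap\kappa$ is forced cofinal in $\kappa$ for all but countably many $\alpha$, giving $\cof^{V[G]}(\kappa)=\omega$.

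For clause~(\ref{itm:6}), first handle $\kappa=\lambda$ when $\lambda$ is regular: this is precisely Lemma~\ref{lem4.8}. For a regular $\kappa \in \lambda\setminus\cK$, the point is the promises built into $\Nambagameone{M}{\cK}$: by a density argument, for each such $\kappa$ and each condition, one extends every relevant tuple's run by a round in which $\PI$ first plays $\lambda_n = \kappa$, forcing $\player$ to commit to a bound $\beta_n<\kappa$ with $\sup(M_{k}\cap\kappa)\le\beta_n$ for all later $k$; consequently each $M^G_\alpha\cap\kappa$ is bounded below $\kappa$, while by Lemma~\ref{lemseqcont2} the sequence $(\sup(M^G_\alpha\cap\kappa):\alpha<\omega_1)$ is continuous and (by density, since every element of $H_\mu$ enters some $M^G_\alpha$) unbounded in $\kappa$, so it is a club of order type $\omega_1$ in $\kappa$, and $\cof^{V[G]}(\kappa)=\omega_1$. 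The main obstacle is exactly this step: one must verify that forcing $\player$ to commit to such a $\lambda_n=\kappa$-promise is dense, i.e.\ that from any position a winning $\Sigma$ still lets $\player$ survive after $\PI$ picks $\lambda_n=\kappa$ — but this is built into $\mathcal{W}_1$, since every $M\in\mathcal{W}_1$ admits a winning strategy against \emph{all} legal $\PI$-moves, and Lemma~\ref{proj stat many models2} supplies such strategies via the Thinning lemma (turning $\kappa$ from branching to fixing).

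Finally, clause~(\ref{itm:7}): the forcing has size $2^{<\mu}$ and by genericity collapses $|H_\mu|=2^{<\mu}$ onto $\omega_1$ (the models $M^G_\alpha$ exhaust $H_\mu$, so $\bigcup_\alpha M^G_\alpha = H_\mu^V$ is mapped onto by an $\omega_1$-sequence), whence $(2^{<\mu})^+$ becomes $\omega_2$; and any regular $\kappa$ with $\lambda\le\kappa<2^{<\mu}$ is an uncountable regular cardinal $<\mu$, so the same promise-based argument as in clause~(\ref{itm:6}) — now applied with $\PI$ allowed to play cardinals up to, and the models exhausting, all of $\cR$ below $\mu$ rather than just below $\lambda$ — shows $\cof^{V[G]}(\kappa)=\omega_1$; this last point uses that the side-condition models $M^G_\alpha$ are elementary in $H_\mu$ and hence see every regular cardinal below $\mu$, combined with the continuity Lemma~\ref{lemseqcont2}.
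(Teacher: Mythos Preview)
Your handling of clauses~(\ref{itm:3})--(\ref{itm:6}) matches the paper's. The paper says explicitly that (\ref{itm:3}), (\ref{itm:4}), (\ref{itm:5}) follow mutatis mutandis from the $\strNamforczero{\cK}$ arguments, and for (\ref{itm:6}) it argues by contradiction using a single \emph{new} model $N\ni\kappa$ carrying a $(\kappa,\beta)$-promise rather than extending each existing tuple's run; this is only a cosmetic difference from your density version (your version works once you note $M_\alpha^G\subseteq M_\beta^G$ for $\alpha<\beta$, so a bound at one level with $\kappa$ in its model propagates downward).

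The genuine gap is in clause~(\ref{itm:7}). For a regular $\kappa$ with $\lambda<\kappa\le 2^{<\mu}$ you propose to rerun the promise mechanism from~(\ref{itm:6}), ``now applied with $\PI$ allowed to play cardinals up to \ldots\ all of $\cR$ below $\mu$''. But $\PI$ is \emph{not} allowed to do this: in Definition~\ref{defnambagame2} the first move $\lambda_n$ of each round is constrained to satisfy $\omega_1<\lambda_n<\lambda$. There is no way to extract a promise $(\kappa,\beta)$ for any $\kappa\ge\lambda$, so no density argument of the (\ref{itm:6})-type is available. That the side-condition models eventually contain $\kappa$ is irrelevant; what fails is the legality of the move $\lambda_n=\kappa$.

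The paper's argument for this range is genuinely different and does not use the game. One fixes $X\subseteq H_\mu$ of size $\kappa$ with a wellorder $\le_X$ of type $\kappa$ and observes that $\Hull^\ast(M_\alpha^G,\lambda)$ is a \emph{ground-model} set: since $M_\alpha^G$ is obtained from a single $V$-model $M$ by repeatedly hulling in ordinals $\xi_k<\lambda$, one has $\Hull^\ast(M_\alpha^G,\lambda)=\Hull^\ast(M,\lambda)\in V$, of $V$-size $\lambda<\kappa$. Hence each $\Hull^\ast(M_\alpha^G,\lambda)\cap X$ is bounded in $(X,\le_X)$, and continuity (Lemma~\ref{lemseqcont2}) together with the fact that these sets cover $X$ yields a continuous cofinal map $\omega_1\to(X,\le_X)$.
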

\begin{proof}
	The statements $(\ref{itm:3})$, $(\ref{itm:4})$, $(\ref{itm:5})$ follow from the proofs of respectively Proposition~\ref{lem 3.73}, Corollary~\ref{cor 3.74} and Proposition~\ref{itnambaforcing lem1}, as these proofs remain, mutatis mutandis, valid for the forcing $\strNamforcone{\cK}$.
	
	The case $\kappa = \lambda$ of $(\ref{itm:6})$ is just Lemma~\ref{lem4.8}. To conclude the proof of $(\ref{itm:6})$, we prove that if $\kappa \in \lambda \setminus \mathcal{K}$ is regular, then $\strNamforcone{\cK}$ forces the cofinality of $\kappa$ to become $\omega_1.$ For this, we first establish that for every $\alpha < \omega_1,$ the forcing $\strNamforcone{\cK}$ forces that $M_\alpha^G\cap \kappa$ is bounded in $\kappa.$ Suppose otherwise. Then there would exist $p \in \strNamforcone{\cK}$ and $(M,r_M^p, \Sigma_M^p,d_M^p) \in p$ such that
	\begin{enumerate}
		\item[(a)] $p$ forces that $\delta_\alpha^G = M \cap \omega_1,$
		\item[(b)] $p$ forces that $M_\alpha^G \cap \kappa$ is cofinal in $\kappa.$
	\end{enumerate}
	This is however impossible because we can find $q \leq p$ with certain\linebreak $(N,r_N^q, \Sigma_N^q,d_N^q) \in q$ such that $p,\kappa \in N$ and such that in $r_N^q$ $\player$ has made a promise $(\kappa,\beta)$ for some $\beta < \kappa.$ Then $q$ forces that $M^G_\alpha \cap \kappa \subseteq \beta,$ which is a contradiction.
	So $\sup(M_\alpha^G\cap \kappa) < \kappa$ for every $\alpha$ and since $(M_\alpha^G: \alpha< \omega_1)$ is continuous, it follows that $(\sup(M_\alpha^G\cap \kappa): \alpha< \omega_1)$ is a closed unbounded subset of~$\kappa.$
	
	For statement $(\ref{itm:7}),$ the continuous sequence $(M_\alpha^G: \alpha< \omega_1)$ covers $H_\mu^V,$ so $H_\mu^V$ acquires cardinality $\aleph_1$ in the forcing extension. Moreover, if $\kappa$ is a regular cardinal with $\lambda < \kappa \leq 2^{<\mu},$ let $X \subseteq H_\mu$ of size $\kappa$ and $\leq_X$ be a wellordering of $X$ of ordertype $\kappa.$ Then $(\Hull^*(M_\alpha^G,\lambda)\cap X: \alpha< \omega_1)$ is a continuous sequence of ground-model subsets of $X$ of size $\lambda$ that together cover $X$. It follows that the map mapping $\alpha < \omega_1$ to the $\leq_X$-supremum of $\Hull^*(M_\alpha^G,\lambda)\cap X$ is a continuous cofinal increasing map from $\omega_1$ to $(X,\leq_X)$.
\end{proof}

\section{An application to the countable-cofinality-constructible model}
\label{s5}

Recall that the countable-cofinality-constructible model $C^*$ introduced in \cite{kennedy2021inner} is defined by
\begin{align*}
	C^*&=\bigcup C^*_\alpha,\text{ where}\\
	C^*_0&=\emptyset,\  C^*_\beta=\bigcup_{\alpha <\beta} C^*_\alpha\text{ when }\beta\text{ is limit, and}
\end{align*}
$C^*_{\alpha+1}$ is the set of all sets $\{ a\in C^*_\alpha\mid C^*_\alpha\vDash \varphi(a,\bar b) \}$
with $\varphi(.,\bar b)$ an $\mathcal{L}(Q_\omega^{\cof})$-formula with parameters in $C^*_\alpha$. 

In \cite{kennedy2021inner} and \cite{Y}, Namba forcings have been found useful for controlling $C^*$ through the coding of countable sets by means of making specific regular cardinals $\omega$-cofinal. Using the forcings $\strNamforcone{\cK}$, also larger objects can be coded in this way.

\begin{proposition}
	For every forcing extension of the constructible universe $L\subseteq L[G]$, there exists a further extension $L[G]\subseteq L[G][H]$ such that:
	\begin{itemize}
		\item
		$C^*(L[G][H])=L[G]$,
		\item and for every $S\subseteq \omega_1^{L[G]}$:
		\[ L[G]\vDash S\text{ is stationary}\Leftrightarrow L[G][H]\vDash S\text{ is stationary.}  \]
	\end{itemize}
\end{proposition}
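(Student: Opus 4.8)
The plan is to apply the forcing $\strNamforcone{\cK}$ for a suitably chosen $\cK$ inside $L[G]$, and then argue on the one hand that stationary subsets of $\omega_1$ are preserved (this is immediate from the forcing being ssp, Corollary~\ref{cor 3.74} and its analogue in the main theorem), and on the other hand that the resulting model computes $C^*$ to be exactly $L[G]$. The delicate point is the second: we must arrange that $C^*$ of the extension is \emph{neither too small nor too large}. Following \cite{kennedy2021inner} and \cite{Y}, the idea is to make $L[G]$ coded into the $\cof$-predicate in a way $C^*$ can decode, while simultaneously destroying enough structure above $L[G]$ that $C^*$ cannot see more.

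First I would set up the coding. Work in $L[G]$, say $G$ is $L$-generic for a poset $\bbP \in L$. Pick a regular cardinal $\lambda$ of $L[G]$ large enough that $L[G]$ is (essentially) contained in $H_\lambda^{L[G]}$ together with a bijection between $\lambda$ and $H_\lambda^{L[G]}$, fix a set $A \subseteq \lambda$ coding $G$ (equivalently, coding $\bbP$ and the generic filter) via this bijection, and let $\cK$ be a set of $L[G]$-regular cardinals $> \omega_1$ chosen so that the pattern of which cardinals in some canonical interval get cofinality $\omega$ versus $\omega_1$ in $L[G]^{\strNamforcone{\cK}}$ encodes $A$. Concretely, one uses a ladder of cardinals $(\kappa_\xi : \xi < \lambda)$ in $L[G]$ (e.g.\ a block of successor cardinals, or cardinals in the interval $(\lambda, 2^{<\mu})$), and puts $\kappa_{2\xi} \in \cK$ always, while $\kappa_{2\xi+1} \in \cK$ iff $\xi \in A$; by the main theorem, in $H:=$ the generic, $\kappa_{2\xi}$ and the members of $\cK$ get cofinality $\omega$ and the $L[G]$-regular cardinals not in $\cK$ below the relevant bound get cofinality $\omega_1$, so $A$ — hence $G$ — is recoverable from the $\cof$-predicate of $L[G][H]$, uniformly in a way $C^*(L[G][H])$ can carry out. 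This gives $L[G] \subseteq C^*(L[G][H])$.

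Second I would argue $C^*(L[G][H]) \subseteq L[G]$, which is the main obstacle. The point is that $C^*$ only has access to the $\cof$-predicate, i.e.\ to which ordinals have countable cofinality, and by the main theorem the forcing $\strNamforcone{\cK}$ changes cofinalities in a completely transparent, ground-model-definable way: every $L[G]$-regular cardinal either keeps uncountable cofinality (in fact becomes cofinality $\omega_1$, or stays as it is below $\omega_1$), or — for those that turn into cofinality $\omega$ — belongs to the explicitly chosen set $\cK \cup \{\kappa_{2\xi}\}$, which lies in $L[G]$. So the $\cof$-predicate of $L[G][H]$, restricted to ordinals, is definable in $L[G]$ from the parameter (the sequence of ladder cardinals together with $\cK$), and moreover it adds no new subsets of ordinals beyond what $L[G]$ already identifies: the only genuinely new information is the $\omega$-cofinal sequences themselves, but these are recursive in the $\cof$-predicate and the ladder. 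Hence every $C^*_\alpha$ computed in $L[G][H]$ coincides with the one computed in $L[G]$ relative to that parameter, and since the ladder and $\cK$ are in $L[G]$, one gets $C^*(L[G][H]) \subseteq L[G]$ by induction on $\alpha$. The technical care needed here is exactly the bookkeeping making ``$\cof^{L[G][H]}$ is $L[G]$-definable from a parameter in $L[G]$, and decodes $G$'' precise; this is where one leans on clauses (\ref{itm:5}) and (\ref{itm:6}) of the theorem asserting that \emph{no other} regular cardinals of $L[G]$ acquire countable cofinality.

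Finally, stationarity preservation in both directions: $L[G] \models S$ stationary $\Rightarrow L[G][H] \models S$ stationary is immediate since $\strNamforcone{\cK}$ is ssp (Corollary~\ref{cor 3.74}, as transferred in the theorem); the converse is trivial because $\omega_1$ is preserved (ssp forcings preserve $\omega_1$) so a club of $L[G][H]$ disjoint from $S$ would contain a club of $L[G]$ — wait, one needs that $L[G][H]$ adds no new clubs that kill an old stationary set, which is again exactly ssp-ness; and a set stationary in the larger model is a fortiori stationary in the smaller. Assembling these three ingredients — coding up ($L[G] \subseteq C^*$), coding down ($C^* \subseteq L[G]$), and ssp-ness (stationarity) — completes the proof.
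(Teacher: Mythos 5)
Your overall strategy is the paper's: force over $L[G]$ with $\strNamforcone{\cK}$ for a $\cK$ chosen so that the pattern of which regular cardinals acquire cofinality $\omega$ codes $G$, use ssp-ness for the stationarity clause, and get $C^*(L[G][H])\subseteq L[G]$ from the fact that the new $\cof$-predicate on ordinals is computable in $L[G]$ (an ordinal is $\omega$-cofinal in $L[G][H]$ iff it was $\omega$-cofinal in $L[G]$ or its $L[G]$-cofinality lies in $\cK$), followed by induction on the levels $C^*_\alpha$. That part, and the stationarity argument, are fine.

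The gap is in the decoding step for the inclusion $L[G]\subseteq C^*(L[G][H])$. You code $G$ as a set $A\subseteq\lambda$ \emph{via a bijection between $\lambda$ and $H_\lambda^{L[G]}$ chosen in $L[G]$}, and you use a ladder of cardinals also specified only in $L[G]$; you then assert that $A$, hence $G$, ``is recoverable in a way $C^*(L[G][H])$ can carry out.'' But $C^*(L[G][H])$ has no access to arbitrary parameters of $L[G]$ at this stage -- that $L[G]\subseteq C^*(L[G][H])$ is exactly what you are trying to prove, so invoking the $L[G]$-bijection (or an $L[G]$-definable ladder whose identification may depend on $G$, e.g.\ if $\bbP$ collapses cardinals in the relevant region) is circular. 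Even the weaker route ``$A\in C^*$ and $L[A]=L[G]$'' fails as written: $A\in C^*$ needs the ladder to be available in $C^*$, and $G\in L[A]$ needs the coding bijection to be available in $L[A]$; neither is guaranteed by your choices. The fix is to make all decoding data lie in $L$: the paper fixes in $L$ a sequence $(\kappa_p : p\in\bbP)$ of $L$-regular cardinals $\geq\omega_2$ with $|\bbP|^L<\kappa_p$ and $\kappa_p\neq\kappa_q$ for $p\neq q$ (so each $\kappa_p$ stays regular in $L[G]$), sets $\cK_G=\{\kappa_p : p\in G\}$, and then recovers $G=\{p : \cof^{L[G][H]}(\kappa_p)=\omega\}$ inside $C^*$ using only the parameter $(\kappa_p : p\in\bbP)\in L\subseteq C^*$; since $C^*$ is an inner model of $\mathsf{ZFC}$ containing $L\cup\{G\}$, this yields $L[G]\subseteq C^*(L[G][H])$. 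Your even/odd ladder and the detour through $H_\lambda^{L[G]}$ are unnecessary once the indexing is done by conditions of $\bbP$ inside $L$.
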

\begin{proof}
	Let $\mathbb{P}\in L$ and let $G$ be $\mathbb{P}$-generic.
	Furthermore, let $(\kappa_p:p\in\mathbb{P})\in L$ be a sequence of regular (in $L$) cardinals $\geq \omega_2$ such that $|\mathbb{P}|^L<\kappa_p\neq \kappa_q$ for every two dinstinct $p,q\in\mathbb{P}$. In $L[G]$, consider the set $\mathcal{K}_G=\{ \kappa_p:p\in G \}$ and let $H$ be $L[G]$-generic for the forcing $\strNamforcone{\mathcal{K}_G}$.
	
	Then indeed $C^*(L[G][H])=L[G]$ because of the following two considerations. Firstly, clearly $G\in C^*(L[G][H])$ because $p\in G$ iff $\cof^{L[G][H]}(\kappa_p)=\omega$. Secondly, $C^*(L[G][H])\subseteq L[G]$ because for every ordinal $\beta$,
	\begin{align*}
		\phantom{}&\{ \alpha <\beta:\cof^{L[G][H]}(\alpha)=\omega \}\\
		=&\{ \alpha <\beta:(\cof^{L[G]}(\alpha)=\omega)\vee
		(\cof^{L[G]}(\alpha)=\kappa_p\text{ for some }p\in G)\}\in L[G],
	\end{align*}
	which by induction on $\alpha$ gives that $C^*_\alpha(L[G][H])\subseteq L[G]$ for every $\alpha$.
\end{proof}

\begin{corollary}
	Every theory $T$ that consistently holds in a forcing extension of $L$, consistently holds in $C^*$.
\end{corollary}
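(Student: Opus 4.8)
The plan is to read the Corollary directly off the preceding Proposition. First I unpack the statement metamathematically: ``$T$ consistently holds in a forcing extension of $L$'' means that $\ZFC$ together with ``$V$ is a set-forcing extension of $L$'' and $T$ is consistent, while ``$T$ consistently holds in $C^*$'' means that $\ZFC$ together with ``$C^*\models T$'' is consistent. So fix a model $M\models\ZFC$, a poset $\mathbb{P}\in L^M$, and an $L^M$-generic filter $G$ with $L^M[G]\models T$. By L\"owenheim--Skolem I may take $M$ countable; then $L^M$ and $L^M[G]$ are countable, so generic filters over $L^M[G]$ exist for any poset lying in it, and $L^M[G]\models\ZFC$ since it is a forcing extension of the model $L^M$ of $\ZFC$.

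Working inside $M$, we have that $L^M\subseteq L^M[G]$ is a set-forcing extension, so the preceding Proposition applies to it: there is a set-generic filter $H$ over $L^M[G]$ for the forcing $\strNamforcone{\mathcal{K}_G}$, built from a sequence $(\kappa_p:p\in\mathbb{P})\in L^M$ of cardinals regular in $L^M$ exactly as in that proof, such that $C^*(L^M[G][H])=L^M[G]$; such $H$ exists by countability of $L^M[G]$. Now $L^M[G][H]$ is a model of $\ZFC$, being a forcing extension of the model $L^M[G]$ of $\ZFC$, and its internally computed countable-cofinality-constructible inner model is exactly $L^M[G]$, which satisfies $T$. Hence $L^M[G][H]\models{}$``$C^*\models T$'', which is precisely the sought consistency statement.

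The only delicate point is the bookkeeping around the phrase ``consistently holds'' together with ensuring that the needed generic $H$ is actually available over a model of $\ZFC$; I expect this to be the main thing to handle with care. It is dealt with by first cutting down to a countable model via L\"owenheim--Skolem, after which genericity for the set forcing $\strNamforcone{\mathcal{K}_G}$ is free and the passage to $L^M[G][H]$ changes neither $L^M$, nor $G$, nor the value of the definable inner model $C^*$ as computed inside $L^M[G][H]$. With that in place there is no further content: the Corollary is simply the observation that the Proposition exhibits an arbitrary forcing extension of $L$ as the $C^*$ of a $\ZFC$ model. (From the second clause of the Proposition one moreover gets, for free, that stationary subsets of $\omega_1^{L^M[G]}$ are preserved into $L^M[G][H]$, although this is not needed here.)
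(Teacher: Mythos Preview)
Your argument is correct and is precisely the intended derivation: the paper states the Corollary without proof, and what you wrote is the standard unpacking---pass to a countable model, use the Proposition to obtain the further generic $H$, and observe that $L^M[G][H]$ is then a $\ZFC$ model whose internal $C^*$ is $L^M[G]\models T$. There is nothing to add; the careful handling of the existence of $H$ via countability and the absoluteness of $L$ between $L^M[G]$ and $L^M[G][H]$ are exactly the points one must check.
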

\section{Semiproperness}
As closing remarks, we mention the following further analysis of semiproperness of $\strNamforczero{\cK}$ and $\strNamforcone{\cK}$.

\begin{proposition} \label{semiPop0}
	Suppose $\mathbb{P}$ is a forcing such that for every $\kappa\in\mathcal{K}$, $\Vdash_{\mathbb{P}} \cof(\kappa)=\omega.$
	
	Let $\theta\gg \mathbb{P}$ regular and $M\prec H_\theta$ countable. 
	\begin{enumerate}
		\item  \label{itm:Prop61(1)}	If there exists $p\in\mathbb{P}$ that is $(M,\mathbb{P})$-semigeneric, then $\player$ has a winning strategy for the game $\Nambagamezero{M}{\mathcal{K}}$.
		\item \label{itm:Prop61(2)}  If moreover also  $p \Vdash_{\mathbb{P}}\cof(\kappa) \geq \omega_1$, for every  uncountable regular cardinal\linebreak $\kappa <\lambda$ with $\kappa\notin\mathcal{K}$, then $\player$ has a winning strategy for the game $\Nambagameone{M}{\mathcal{K}}$.
	\end{enumerate}
\end{proposition}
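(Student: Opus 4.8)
The plan is to build the required winning strategies for $\player$ by hand, having $\player$ play ``generically below $p$'' while using $p$ as a master condition. First I would fix, using $\mathbb{P}\in M\prec H_\theta$, a function $F\in M$ assigning to every regular cardinal $\kappa$ with $\Vdash_{\mathbb{P}}\cof(\kappa)=\omega$ a $\mathbb{P}$-name $\dot f_\kappa$ for a cofinal function $\omega\to\kappa$; by the hypothesis, $\mathcal{K}\subseteq\dom(F)$. The strategy for $\player$ maintains, alongside the run, a decreasing chain $p=q_{-1}\geq q_0\geq q_1\geq\cdots$ of conditions together with, for each $n$, a $\mathbb{P}$-name $\dot\eta_n\in M$, subject to the invariant that $q_n$ decides $\dot\eta_i=\check\xi_i$ for all $i\leq n$, where $\xi_i$ is $\player$'s $i$-th ordinal move. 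When $\PI$ plays a pair $(\kappa_n,\nu_n)$: since $\kappa_n\in M_n=\Hull(M,\xi_0,\dots,\xi_{n-1})$ we may write $\kappa_n=g_n(\xi_0,\dots,\xi_{n-1})$ with $g_n\in M$, so $q_{n-1}$ forces $F(g_n(\dot\eta_0,\dots,\dot\eta_{n-1}))=\dot f_{\kappa_n}$, and as $\kappa_n\in\mathcal{K}$ this name is forced cofinal in $\kappa_n$; hence there are $q_n\leq q_{n-1}$ and $j_n<\omega$ with $q_n\Vdash\dot f_{\kappa_n}(j_n)=\check\xi_n$ for some $\xi_n\in\kappa_n\setminus\nu_n$, and $\xi_n$ is again the $q_n$-value of a name $\dot\eta_n\in M$ built from $F$, $g_n$, $\dot\eta_0,\dots,\dot\eta_{n-1}$ and $\check j_n$. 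So $\xi_n$ is a legal move as far as its value is concerned.

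The heart of the argument is to check that $\player$'s move also keeps $\omega_1\cap M_n$ fixed, that is, $M_{n+1}\cap\omega_1=M_n\cap\omega_1$ with $M_{n+1}=\Hull(M_n,\xi_n)=\Hull(M,\xi_0,\dots,\xi_n)$. As $M_n\subseteq M_{n+1}$ and inductively $M_n\cap\omega_1=M\cap\omega_1$, it suffices to see $M_{n+1}\cap\omega_1\subseteq M$. Given $\beta\in M_{n+1}\cap\omega_1$, write $\beta=f(\xi_0,\dots,\xi_n)$ with $f\in M$; then $q_n\Vdash\dot\zeta=\check\beta$ for the name $\dot\zeta\in M$ that evaluates to $f(\dot\eta_0,\dots,\dot\eta_n)$ when that ordinal is below $\omega_1$ and to $0$ otherwise. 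Feeding $q_n\leq p$ and the predense set $D=\{\,r:r\text{ decides }\dot\zeta\,\}\in M$ into the definition of $(M,\mathbb{P})$-semigenericity of $p$ produces $r\in D$ with $r\parallel q_n$ and $r\parallel_{\omega_1}M$; since any common extension of $r$ and $q_n$ forces $\dot\zeta=\check\beta$, the value $r$ decides for $\dot\zeta$ is $\beta$, so $\beta$ is definable from $r$ over $M$ and thus $\beta\in\Hull(M,r)\cap\omega_1=M\cap\omega_1$. This establishes (1).

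For (2) the only new point is $\PI$'s cardinal moves $\lambda_n$ and the promises they trigger. When $\PI$ opens round $n$ with $\lambda_n$, note that $\lambda_n\in M_n$ is a ground-model uncountable regular cardinal with $\omega_1<\lambda_n<\lambda$ and $\lambda_n\notin\mathcal{K}$, so the extra hypothesis gives $p\Vdash\cof(\lambda_n)\geq\omega_1$; since $M$ is countable, $p$ forces $M[\dot G]\cap\lambda_n$ to be a countable subset of an ordinal of uncountable cofinality, hence $p$ forces the (legitimate, as $M$ is countable) name $\dot\gamma_n$ for $\sup(M[\dot G]\cap\lambda_n)$ to be below $\lambda_n$. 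I would then refine $q_{n-1}$ to some $q_n'$ deciding $\dot\gamma_n=\check\beta_n$ for a suitable $\beta_n<\lambda_n$, have $\player$ play the promise $(\lambda_n,\beta_n)$, and continue the construction from $q_n'$, keeping all later conditions $\leq q_n'$. Each later model $M_m=\Hull(M,\xi_0,\dots,\xi_{m-1})$ sits inside $M[\dot G]$ for a generic through $q_{m-1}$ — its generators being $q_{m-1}$-values of names in $M$ — and $q_{m-1}\leq q_n'$ forces $M[\dot G]\cap\lambda_n\subseteq\beta_n+1$; so $\sup(M_m\cap\lambda_n)\leq\beta_n$ and the promise is honored, while the $\omega_1$-preservation clause is met exactly as in (1) (the promise constraints only further shrink the models, which is harmless). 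Hence $\player$ never loses, giving (2).

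The step I expect to be the main obstacle is the bookkeeping in the first two paragraphs: one must be sure that every ordinal $\player$ is forced to play — and therefore every element of the ever-growing models $M_n$ — is the $G$-value of a name lying in $M$ itself, because this is exactly what lets the semigenericity of the single condition $p$, which only refers to $M=M_0$, be invoked again and again as the models grow. Once that is in place the rest is routine, and for (2) the only extra subtlety, namely that a uniform bound $\beta_n<\lambda_n$ exists at all, is handled cleanly by deciding the one name $\dot\gamma_n$ that $p$ already forces below $\lambda_n$. (Alternatively one can run the argument by absoluteness: both games are closed for $\player$, hence determined, and a $\PI$-strategy $\tau$ fails to be winning as soon as the tree of legal $\player$-responses against $\tau$ is ill-founded — a property absolute between $V$ and a forcing extension since it amounts to the non-existence of an ordinal rank function — and the play constructed above, read as a $\mathbb{P}$-name for a branch of that tree, witnesses exactly this ill-foundedness below $p$.)
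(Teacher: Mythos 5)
Your proposal is correct and follows essentially the same route as the paper's proof: you fix names for the cofinal functions that can be computed inside $M$, build a decreasing chain of conditions below the semigeneric $p$ that decide $\player$'s moves so that every model $M_n$ is forced into $M[\dot G]$, use semigenericity to keep $M_n\cap\omega_1=M\cap\omega_1$, and for part (\ref{itm:Prop61(2)}) obtain the promise ordinal $\beta_n$ by deciding $\sup(M[\dot G]\cap\lambda_n)$, which is forced below $\lambda_n$ since $p\Vdash\cof(\lambda_n)\geq\omega_1$. The only (welcome) extra detail is that you explicitly unwind the paper's predense-set formulation of $(M,\mathbb{P})$-semigenericity to get $M[\dot G]\cap\omega_1\subseteq M$, a step the paper's proof uses implicitly.
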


\begin{corollary}$\,$
	\begin{enumerate}
		\item 	The forcing $\strNamforczero{\cK}$ is semiproper if and only if for every regular $\theta \gg \lambda,$ there are club many countable $M\prec H_\theta$ such that $\player$ has a winning strategy for the game $\Nambagamezero{M}{\mathcal{K}}.$
		\item The forcing $\strNamforcone{\cK}$ is semiproper if and only if for every regular $\theta \gg \lambda,$ there are club many countable $M\prec H_\theta$ such that $\player$ has a winning strategy for the game $\Nambagameone{M}{\mathcal{K}}.$
	\end{enumerate}

\end{corollary}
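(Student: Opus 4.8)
The plan is to prove (1) in full and to obtain (2) by repeating the argument with $\strNamforcone{\cK}$, $\Nambagameone{M}{\mathcal{K}}$, $\mathcal{W}_1$, Proposition~\ref{semiPop0}$(\ref{itm:Prop61(2)})$, Lemma~\ref{proj stat many models2}, and the clauses forcing $\cof(\kappa)=\omega$ for $\kappa\in\mathcal{K}$ (item~$(\ref{itm:5})$) and $\cof(\kappa)\geq\omega_1$ for the other uncountable regular $\kappa<\lambda$ (item~$(\ref{itm:6})$), in place of their $\strNamforczero{\cK}$-counterparts. I would first prepare two auxiliary facts. The first is the standard characterisation --- which I would cite, or derive from the usual projection argument --- that a forcing $\bbP$ is semiproper if and only if for some (equivalently, for every) sufficiently large regular $\theta$ the set of countable $M\prec H_\theta$ with $\bbP\in M$ for which $\bbP$ is semiproper with respect to $M$ contains a club in $[H_\theta]^\omega$. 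The only delicate point is the backward implication: if $C$ is such a club and $N\prec H_{\theta^\ast}$ with $\bbP,\theta,C\in N$, then $N\cap H_\theta\in C$, and one checks that every $(N\cap H_\theta,\bbP)$-semigeneric condition is $(N,\bbP)$-semigeneric, because every predense $D\subseteq\bbP$ with $D\in N$ lies in $H_\theta$ (hence in $N\cap H_\theta$), while $r\parallel_{\omega_1}N$ is equivalent to $r\parallel_{\omega_1}(N\cap H_\theta)$ since both only depend on the functions in $N$ with range contained in $\omega_1$, all of which lie in $H_\theta$.

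The second fact is an invariance lemma for the Namba games: for regular cardinals $\theta_1\leq\theta_2$ with $\theta_1\gg\lambda$ and countable $M\prec H_{\theta_2}$ with $\mathcal{K},\theta_1\in M$, player $\player$ has a winning strategy in $\Nambagamezero{M}{\mathcal{K}}$ if and only if $\player$ has one in $\Nambagamezero{M\cap H_{\theta_1}}{\mathcal{K}}$, and similarly for $\Nambagameone{}{}$. I would prove this by checking inductively that, if both players copy their moves, the model built over $M\cap H_{\theta_1}$ after $n$ rounds equals the intersection with $H_{\theta_1}$ of the model built over $M$ after the same $n$ rounds --- the induction only uses that all moves are ordinals below $\lambda<\theta_1$ and that $H_{\theta_1}\in M_n$. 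Since $\omega_1,\mathcal{K},\mathcal{R}_\lambda\subseteq H_{\theta_1}$, the two games then have the same positions, the same legal moves, and the same winning condition for $\player$, hence literally the same winning strategies. Combined with the fact that the projection of a club in $[H_{\theta_2}]^\omega$ to $[H_{\theta_1}]^\omega$ contains a club, this lets me move the assertion ``club many $M\prec H_\theta$ have $\player$ winning $\Nambagamezero{M}{\mathcal{K}}$'' between any $\theta\gg\lambda$, the fixed $\mu$, and any $\theta'\gg\strNamforczero{\cK}$.

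Granting these, the corollary is bookkeeping. For ``$\Rightarrow$'' I would assume $\strNamforczero{\cK}$ semiproper, fix $\theta\gg\lambda$, choose a regular $\theta'\geq\theta$ with $\theta'\gg\strNamforczero{\cK}$, and use the characterisation to get club many $N\prec H_{\theta'}$ semiproper for $\strNamforczero{\cK}$; each such $N$ carries an $(N,\strNamforczero{\cK})$-semigeneric condition, so by Corollary~\ref{cor 3.70} and Proposition~\ref{semiPop0}$(\ref{itm:Prop61(1)})$ player $\player$ wins $\Nambagamezero{N}{\mathcal{K}}$ for club many $N\prec H_{\theta'}$, and projecting down via the invariance lemma yields club many $M\prec H_\theta$ with $\player$ winning $\Nambagamezero{M}{\mathcal{K}}$. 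For ``$\Leftarrow$'' I would assume the right-hand side, fix a regular $\theta'\gg\strNamforczero{\cK}$, and apply the hypothesis at $\theta:=\theta'$ to obtain club many $N\prec H_{\theta'}$ with $\mathcal{K}\in N$ and $\player$ winning $\Nambagamezero{N}{\mathcal{K}}$; by the invariance lemma each such $N$ satisfies $N\cap H_\mu\in\mathcal{W}_0$, so $\strNamforczero{\cK}$ is strongly --- hence plainly --- semiproper with respect to $N$ by Proposition~\ref{lem 3.73}, and the characterisation then gives that $\strNamforczero{\cK}$ is semiproper. The step I expect to require the most care is the invariance lemma together with the club-projection manipulations, since these are exactly what reconcile the universally quantified ``for every regular $\theta\gg\lambda$'' on the right-hand side with the single large $\theta'$ at which semiproperness is tested, and they demand attention to which parameters ($\mathcal{K}$, $\theta_1$, $H_\mu$, the relevant clubs) belong to which model; everything else reduces to direct appeals to Proposition~\ref{semiPop0}, Proposition~\ref{lem 3.73}, Corollary~\ref{cor 3.70}, and their $\strNamforcone{\cK}$-analogues.
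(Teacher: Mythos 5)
Your proposal is correct and follows essentially the route the paper intends: the corollary is stated as an immediate consequence of Proposition~\ref{semiPop0} (using Corollary~\ref{cor 3.70}, resp.\ the cofinality clauses for $\strNamforcone{\cK}$) in one direction and of Proposition~\ref{lem 3.73} (resp.\ its $\strNamforcone{\cK}$-analogue) plus the standard club characterization of semiproperness in the other, and your write-up merely makes explicit the routine transfer between the various $\theta$'s via the invariance of the games under passing from $M\prec H_{\theta_2}$ to $M\cap H_{\theta_1}$. (One cosmetic slip: not every $f\in N$ with range in $\omega_1$ lies in $H_\theta$; one should replace $f$ by its restriction to $\dom(f)\cap\bbP$, which is in $N\cap H_\theta$ and suffices for the equivalence of $r\parallel_{\omega_1}N$ and $r\parallel_{\omega_1}(N\cap H_\theta)$.)
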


We give the proof of part $(\ref{itm:Prop61(2)})$ of Proposition~\ref{semiPop0}, the proof of part $(\ref{itm:Prop61(1)})$ is similar.
\begin{proof}[Proof of Proposition~\ref{semiPop0}]
	Let $p_0\in\mathbb{P}$ be $(M,\mathbb{P})$-semigeneric and suppose that\linebreak $p_0 \Vdash_{\mathbb{P}}\cof(\kappa) \geq \omega_1$, for every  uncountable regular cardinal $\kappa <\lambda$ with $\kappa\notin\mathcal{K}$. Set $M_0:=M$. For every $\kappa\in\mathcal{K}$, let $\dot{x}_\kappa$ be a $\mathbb{P}$-name such that $p_0\Vdash \dot{x}_\kappa:\omega\to\kappa$ is cofinal, we can assume the sequence $(\dot{x}_\kappa: \kappa\in\mathcal{K})$ belongs to $M.$
	We indicate how $\player$ wins the game $\Nambagameone{M}{\mathcal{K}}.$
	
	In round $n\geq 0$, $\PI$ plays $\lambda_n\in R_\lambda\cap M_n$ with $\lambda_n\notin\mathcal{K}$.
	
	$\player$ picks $p'_n\in\mathbb{P}$ with $p'_n\leq p_n$ and $\beta_n\in\lambda_n$ such that $p'_n\Vdash \sup(M[G]\cap \lambda_n)=\beta_n$.
	
	$\PI$ then plays $(\lambda_n,\beta_n)$.
	
	Next, $\PI$ plays $\kappa_n\in \mathcal{K}\cap M_n$ and $\nu_n<\kappa_n$. Then $\player$ picks $k_n<\omega$, $\xi_n <\kappa_n$ and $p_n\leq p'_n$ such that $p_n\Vdash \dot{x}_\kappa(k_n)=\xi_n\in \kappa_n \setminus \nu_n$.
	
	$\player$ wins this way because for every $n$, $p_n\Vdash M_n\subseteq M[G]$, so in particular $M_n\cap \omega_1=M\cap\omega_1$ and $M\cap\lambda_n\leq\beta_n$ for every $n$.
\end{proof}

One natural situation in which the forcings $\strNamforczero{\cK}$ and $\strNamforcone{\cK}$ are semiproper is the case in which all elements of $\mathcal{K}$ are measurable cardinals.

\begin{proposition}
If all elements of $\mathcal{K}$ are measurable, then for every countable $M\prec H_\theta$, $\player$ has a winning strategy for the game $\Nambagameone{M}{\mathcal{K}}.$
\end{proposition}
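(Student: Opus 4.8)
The plan is to let $\player$ play using normal measures on the cardinals in $\mathcal{K}$. Assume (as is standard in this setting) that $\mathcal{K}\in M$, and fix, using the measurability of each $\kappa\in\mathcal{K}$, a sequence $\vec U=(U_\kappa:\kappa\in\mathcal{K})\in M$ with each $U_\kappa$ a normal — hence $\kappa$-complete and non-principal — ultrafilter on $\kappa$. Since $M=M_0\subseteq M_1\subseteq\cdots$ for the models $M_n$ arising in a run of $\Nambagameone{M}{\mathcal{K}}$, we have $\vec U\in M_n$, and therefore $U_\kappa\in M_n$ whenever $\kappa\in M_n\cap\mathcal{K}$.

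Next I would describe $\player$'s strategy round by round, proving by induction on $n$ that the prescribed moves are legal. Suppose we have reached round $n$. When $\PI$ plays the regular cardinal $\lambda_n\in M_n$ with $\omega_1<\lambda_n<\lambda$ and $\lambda_n\notin\mathcal{K}$, note that $\lambda_n$ is regular of uncountable cofinality while $M_n$ is countable, so $\beta_n:=\sup(M_n\cap\lambda_n)<\lambda_n$; let $\player$ respond $(\lambda_n,\beta_n)$. When $\PI$ then plays $(\kappa_n,\nu_n)$ with $\kappa_n\in\mathcal{K}\cap M_n$ and $\nu_n<\kappa_n$, let $D$ be the countable set of all $f\in M_n$ of the form $f\colon\kappa_n\to\delta$ with $\delta$ equal to $\omega_1$ or to some promise cardinal $\lambda_i$ ($i\le n$) that happens to be $<\kappa_n$. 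Since each such $\delta$ is $<\kappa_n$ and $U_{\kappa_n}$ is $\kappa_n$-complete, every $f\in D$ is constant on a set $A_f\in U_{\kappa_n}$, and its constant value $c_f$ lies in $M_n$ because it is definable from $f$ and $U_{\kappa_n}\in M_n$. Put $A:=\bigcap_{f\in D}A_f\in U_{\kappa_n}$; as $U_{\kappa_n}$ is non-principal and $\kappa_n$-complete, $|A|=\kappa_n$, so $\player$ can and does play some $\xi_n\in A\setminus\nu_n$.

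It then remains to verify that $\player$'s move in round $n$ is legal, i.e.\ that $M_{n+1}:=\hull(M_n,\xi_n)$ satisfies $M_{n+1}\cap\omega_1=M_n\cap\omega_1$ and $\sup(M_{n+1}\cap\lambda_i)\le\beta_i$ for every $(\lambda_i,\beta_i)\in\text{Promise}_{n+1}$. One always has $M_n\subseteq\hull(M_n,\xi_n)$ (via constant functions). For the $\omega_1$-clause, any ordinal in $\hull(M_n,\xi_n)\cap\omega_1$ equals $h(\xi_n)$ for some $h\in M_n$; after a routine normalization (clip the domain to $\kappa_n$ and the values outside $\omega_1$ to $0$) we may assume $h\in D$, so $h(\xi_n)=c_h\in M_n$, giving $M_{n+1}\cap\omega_1=M_n\cap\omega_1$. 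For a promise $(\lambda_i,\beta_i)$ with $\lambda_i<\kappa_n$, the same normalization shows $\hull(M_n,\xi_n)\cap\lambda_i\subseteq M_n\cap\lambda_i$, and the inductive hypothesis (the promise was respected in rounds $i,\dots,n-1$, and $\beta_i=\sup(M_i\cap\lambda_i)$ when it was issued) gives $M_n\cap\lambda_i\subseteq\beta_i+1$, so $\sup(M_{n+1}\cap\lambda_i)\le\beta_i$. For a promise $(\lambda_i,\beta_i)$ with $\lambda_i>\kappa_n$, no measure is used: any $g\in M_n$ with $g\colon\kappa_n\to\lambda_i$ has $\sup(\ran g)<\lambda_i$ since $\kappa_n<\lambda_i=\cof(\lambda_i)$, and $\sup(\ran g)\in M_n\cap\lambda_i\subseteq\beta_i+1$, so $g(\xi_n)\le\beta_i$ and again $\sup(M_{n+1}\cap\lambda_i)\le\beta_i$. (The case $\lambda_i=\kappa_n$ cannot occur since $\lambda_i\notin\mathcal{K}$.) Thus the induction goes through and the resulting run is won by $\player$.

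I do not anticipate a genuine obstacle. The single point needing care is that $\player$ must commit to $\beta_n$ before $\PI$ discloses $\kappa_n$, so $\beta_n$ has to be an admissible promise whether the ensuing $\mathcal{K}$-cardinal turns out to lie below $\lambda_n$ (where $\kappa_n$-completeness of $U_{\kappa_n}$ collapses small-range hull-values back into $M_n$) or above it (where, with no measure at all, regularity of $\lambda_n$ and countability of $M_n$ suffice); the choice $\beta_n=\sup(M_n\cap\lambda_n)$ is admissible in both cases.
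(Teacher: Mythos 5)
Your proof is correct and takes essentially the same route as the paper: there, $\player$ uses the measure on $\kappa_n$ to play $\xi_n\geq\max(\nu_n,\sup(\kappa_n\cap M_n))$ with $\Hull(M_n,\xi_n)\cap\xi_n=M_n\cap\xi_n$ (which is exactly your measure-one intersection argument guaranteeing that no new ordinals appear below $\omega_1$ or below promised $\lambda_i<\kappa_n$), and promises at cardinals $\lambda_i>\kappa_n$ are handled by precisely your regularity computation, which is Lemma~\ref{lemmalambda} of the paper. The only cosmetic difference is your added hypothesis $\mathcal{K}\in M$, which is not needed: since $\kappa_n\in M_n\prec H_\theta$ and $\theta\gg\lambda$, a normal measure $U_{\kappa_n}$ on $\kappa_n$ can be found inside $M_n$ by elementarity at each round.
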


\begin{proof}
	In round $n\geq 0$, $\PI$ plays $\lambda_n\in R_\lambda\cap M_n$ with $\lambda_n\notin\mathcal{K}$.
	
	$\player$ plays $(\lambda_n,\sup(M_n\cap \lambda_n))$.
	
	Next, $\PI$ plays $\kappa_n\in \mathcal{K}\cap M_n$ and $\nu_n<\kappa_n$. Then $\player$ uses measurability of $\kappa_n$ to play some $\xi_n < \kappa_n$ such that $\xi_n\geq\max(\nu_n,\sup(\kappa_n\cap M_n)),$ and
	\[ \Hull(M_n,\xi_n)\cap \xi_n=M_n\cap\xi_n.  \]
	To see that $\player$ wins this way, let's suppose that  $\player$ loses in round $n$ and infer a contradiction.
	For $\player$ to lose in this round, there would exist some $i\leq n$ such that
	\[ \sup(M_{n+1}\cap\lambda_i)>\sup(M_n\cap\lambda_i). \]
If $\lambda_i<\kappa_n$, then from $\lambda_i\in M_i$ we get $\lambda_i<\xi_n$ and
	it  would follow that \[M_{n+1}\cap\lambda_i=M_{n+1}\cap \xi_n\cap\lambda_i=M_n\cap\xi_n\cap\lambda_i=M_n\cap\lambda_i.\]
	This is impossible and therefore $\lambda_i>\kappa_n$, but this would contradict Lemma~\ref{lemmalambda}.
\end{proof}

\nocite{shelah}
\nocite{jensen_handwritten}
\nocite{namba}
\bibliographystyle{amsplain}
\bibliography{Bibliography_cof}
\end{document}